\protected\def\ignorethis#1\endignorethis{}
\let\endignorethis\relax
\def\TOCstop{\addtocontents{toc}{\ignorethis}}
\def\TOCstart{\addtocontents{toc}{\endignorethis}}
\colorlet{Jonathan}{ForestGreen}
\colorlet{Sandro}{RawSienna}
\newtheorem{thm}{Theorem}[section]
\newtheorem{definition}[thm]{Definition}
\newtheorem{prop}[thm]{Proposition}
\newtheorem{lemma}[thm]{Lemma}
\newtheorem{claim}[thm]{Claim}
\theoremstyle{remark}
\newtheorem*{rmk}{Remark}
\newtheorem{ex}[thm]{Example}
\newcommand{\Vect}{{\mathrm{Vect^{f}}}}
\newcommand{\id}{{\mathrm{id}}}
\newcommand{\cala}{{\mathcal A}}
\newcommand{\calb}{{\mathcal B}}
\newcommand{\calc}{{\mathcal C}}
\newcommand{\cald}{{\mathcal D}}
\newcommand{\calf}{{\mathcal F}}
\newcommand{\calg}{{\mathcal G}}
\newcommand{\calm}{{\mathcal M}}
\newcommand{\caln}{{\mathcal N}}
\newcommand{\fieldk}{{\mathbf k}}
\newcommand{\arrows}{\,\lower1pt\hbox{$\longrightarrow$}\hskip-.24in\raise2pt
             \hbox{$\longrightarrow$}\,}
\newcommand{\Mod}{\mathrm{Mod}}
\newcommand{\Alg}{\mathrm{Alg}_{2}}
\newcommand{\Rep}{\mathrm{Rep}}
\newcommand{\Endo}{\mathrm{End}}
\newcommand{\Vectt}{\mathrm{Vect}}
\newcommand{\lincat}{\mathrm{LinCat}}
\newcommand{\unit}{\mathbf{1}}
\newcommand{\KV}{\mathrm{KV}_{\fieldk}}
\newcommand{\Cat}{\mathrm{Cat}}
\newcommand{\homm}{\mathrm{hom}}
\newcommand{\et}{\mathfrak{y}}
\newcommand{\ii}{\mathfrak{i}}
\newcommand{\xc}[1]{}
   \def\MR#1{}
\title{Morita bicategories of algebras and duality involutions}
\author{Jonathan Lorand}
\author{Alessandro Valentino}
\email{jonathan.lorand@math.uzh.ch}
\email{alessandro.valentino@math.uzh.ch}
\address{Institute of Mathematics, University of Zurich, Winterthurerstrasse 190, CH-8057, Zurich, Switzerland}
\begin{document}

\begin{abstract}
The notion of a weak duality involution on a bicategory was recently introduced by Shulman in \cite{shulman}. We construct a weak duality involution on the fully dualisable part of $\Alg$, the Morita bicategory of finite-dimensional $\fieldk$-algebras. The 2-category $\KV$ of Kapranov-Voevodsky $\fieldk$-vector spaces may be equipped with a canonical strict duality involution. We show that the pseudofunctor $\Rep:\Alg^{fd}\to\KV$ sending an algebra to its category of finite-dimensional modules may be canonically equipped with the structure of a duality pseudofunctor. Thus $\Rep$ is a strictification in the sense of Shulman's strictification theorem for bicategories with a weak duality involution.

Finally, we present a general setting for duality involutions on the Morita bicategory of algebras in a semisimple symmetric finite tensor category.     
\end{abstract}
\maketitle
\begin{flushright}
{\tiny
\parbox{7cm}{
\emph{``I learned to recognise the thorough and primitive duality of man; I saw that, of the two natures that contended in the field of my consciousness, even if I could rightly be said to be either, it was only because I was radically both.''}\\[2mm]
\textbf{ The Strange Case of Dr. Jekyll and Mr. Hyde}\\ 
R. Stevenson}}
\end{flushright}
\tableofcontents
\section{Introduction}
It is well-known that the category of (right) modules $\Mod_{A}$ over an algebra $A$ provides important information about $A$ itself. Indeed, much of modern algebra is concerned with the study of categories of representations and their structures. A classical notion of equivalence between algebras is \emph{Morita equivalence}: introduced in \cite{morita}, two algebras $A$ and $B$ are Morita equivalent if their categories of right modules $\Mod_{A}$ and $\Mod_{B}$ are equivalent. An elegant reformulation of Morita equivalence between algebras can be obtained via the language of bicategories. Briefly, by regarding algebras as objects of a \emph{Morita bicategory}\footnote{See the discussion of the name at 
\begin{center} \texttt{https://mathoverflow.net/questions/225701/reference-request-morita-bicategory} . \end{center}}, Morita equivalence corresponds to the notion of equivalence internal to a bicategory. Since the Morita bicategory is a convenient (higher) categorical environment where algebras and their equivalences live, it is natural to investigate the various structures that such a bicategory supports.

In this short paper we investigate one particular structure recently introduced in \cite{shulman}, namely a \emph{weak duality involution} on a bicategory, which axiomatises and generalises the operation of ``taking the opposite category'', together with opposite functors and natural transformations. In a precise sense, this can be regarded as a categorification of taking the dual of an object in a rigid monoidal category. Concretely, we will construct a canonical weak duality involution on the \emph{fully dualisable} sub-bicategory of the Morita bicategory $\Alg$ of \emph{finite-dimensional} algebras. The full dualisability condition, which we explain in the paper, can be morally regarded as a finiteness condition on objects and 1-morphisms of a bicategory. The appearence of fully dualisable bicategories opens an interesting relation to the study of framed fully extended 2d topological quantum field theories, as in \cite{lurieHigher, schompries}. More precisely, the \emph{core} of the fully dualisable part of $\Alg$ corresponds to the symmetric monoidal bifunctors from the framed two-dimensional bordism category $\mathrm{Bord}^{fr}_{2}$ to $\Alg$ itself. It is then natural to expect that $\mathrm{Bord}^{fr}_{2}$ comes equipped with a weak duality involution of geometric origin. Though one of the hidden motivations behind the present work, we will leave this line of research to future developments.

After quickly discussing how the 2-category $\KV$ of \emph{Kapranov-Voevodsky} vector spaces corresponds to the fully dualisable part of $\lincat_{\fieldk}$, we show that $\KV$ can be canonically equipped with a \emph{strict} duality involution. We then consider the bifunctor $\Rep$ which sends an algebra to its category of representation. We prove that $\Rep$ can be canonically equipped with all the necessary data of a \emph{duality pseudofunctor}. Since $\Rep$ is an equivalence of bicategories, this can be regarded as an instance of the strictification theorem proven in \cite{shulman}, which states that any bicategory with weak duality involution is biequivalent to a 2-category with strict duality involution via a duality pseudofunctor.

The constructions presented in Section \ref{section:alg} and \ref{section:rep} are structural enough to allow for a generalisation. In the last part of the paper we consider the case of algebras in a symmetric semisimple finite tensor category $\calc$, and their Morita bicategory $\Alg(\calc)$. We identify the target of the representation bifunctor $\Rep^{\calc}$ as the 2-category $\Mod^{ss}(\calc)$ of semisimple \emph{module categories} over $\calc$, which we briefly recall in the paper. After equipping $\Mod^{ss}(\calc)$ with a weak duality involution, we argue that $\Rep^{\calc}$ can be made into a duality pseudofuntor.

The paper is organized as follows.

In Section \ref{section: moritabicat} we briefly recall some basic aspects of the Morita bicategory of finite dimensional algebras, and we fix some notation regarding modules over an algebra.

In Section \ref{section:finite} we discuss finite linear categories and illustrate some properties of fully dualisable bicategories. We also discuss Kapranov-Voevodsky vector spaces and the representation bifunctor.

In Section \ref{section:duality} we review weak duality involutions on bifunctors and duality pseudofunctors.

In Section \ref{section:alg} we construct a weak duality involution on the fully dualisable sub-bicategory $\Alg^{fd}$ of the Morita bicategory of finite dimensional algebras. This is the content of Theorem \ref{circ is duality inv}.

In Section \ref{section:rep},  in Theorem \ref{thm:rep}, we show that the representation bifunctor $\Rep:\Alg^{fd}\to \KV$ can be canonically equipped with the structure of a duality pseudofunctor, providing a strictification biequivalence.

Finally, in Section \ref{section:general} we briefly describe a generalisation of the results obtained in the previous sections. In particular, we consider module categories and argue that they come equipped with a canonical weak duality involution. We then state a claim concerning the representation pseudofunctor $\Rep^{\calc}:\Alg^{fd}(\calc)\to\Mod^{ss}(\calc)$.

In Appendix \ref{appendix:modules}, we provide some background material concerning modules over finite-dimensional algebras, while in Appendix \ref{appendix:proof} we give the details of the proof of Theorem \ref{circ is duality inv}.

Throughout the paper, we assume the reader to be familiar with the language of bicategories and associated higher categorical constructions. Also, we always assume the field $\fieldk$ has characteristic 0 and is algebraically closed.
\TOCstop
\section*{Acknowledgements}
\TOCstart
The authors would like to thank Mike Shulman and Daniel Tubbenhauer for useful comments and discussions. J.L. and A.V. acknowledge support from the NCCR SwissMAP and partial support from SNF Grant \text{No.} 200020 172498/1, both funded by the Swiss National Science Foundation;  A.V. also acknowledges support from the COST Action MP1405 QSPACE, funded by COST (European Cooperation in Science and Technology).
\section{The Morita bicategory of algebras}\label{section: moritabicat}
In this section we briefly review some aspects of the Morita bicategory of algebras, and some standard notation regarding modules and bimodule over finite-dimensional algebras; see Appendix \ref{appendix:modules} for more details. We refer the reader to \cite{benabou, leinsterbicat} for the terminology and details concerning bicategories and their functors.
\begin{definition}
The \emph{Morita bicategory of algebras} $\mathrm{Alg}_{2}$ is the bicategory where
\begin{itemize}
\item the objects are finite-dimensional $\fieldk$-algebras;
\item the 1-morphisms from $A$ to $B$ are finite-dimensional $\fieldk$-vector spaces which are $(A,B)$-bimodules; and
\item the 2-morphisms are intertwiners of bimodules, \text{i.e.} $\fieldk$-linear maps of bimodules which are compatible with the respective left and right actions of $\fieldk$-algebras.
\end{itemize}
\end{definition}
Throughout the paper, the terms ``algebra'' and ``bimodule'' will always refer to the sort appearing in the definition of $\mathrm{Alg}_{2}$; similarly for ``right modules'', \text{etc.}. Moreover, if $M$ is an $(A,B)$-bimodule, we indicate this by writing ${}_{A}M_B$, though at times we will drop the subscripts. In the following, we schematically recall some basic features of $\mathrm{Alg}_{2}$ and related notions which will be useful in later sections of the paper.

The composition operations for 1- and 2-morphisms in $\Alg$ are defined as follows\footnote{We work under the assumption that representatives for tensor products have been fixed.}
\begin{itemize}
\item \emph{composition of 1-morphisms}: for $_{A}M_{B}$ and $_{B}N_{C}$ bimodules, their composition is defined as 
\begin{equation}
N\circ{M}:={}_{A}(M\otimes_{B}N)_{C}
\end{equation}
\item \emph{horizontal composition of 2-morphisms}: for $f: {}_{A}M_{B}\to {_{A}N_{B}}$ and $g: {}_{B}M{'}_{C}\to {_{B}N{'}_{C}}$ intertwiners of bimodules, their horizontal composition is defined as
\begin{equation}
g\bullet^{h}{f}:=f\otimes_{B}{g}
\end{equation}
\item \emph{vertical composition of 2-morphisms}: for $f: {}_{A}M_{B}\to {}_{A}N_{B}$ and $g: {}_{A}N_{B}\to {}_{A}P_{B}$ intertwiners, their vertical composition is defined as
\begin{equation}
g\bullet^{v}f:=g\circ{f}
\end{equation}
\end{itemize}
We refer to Appendix \ref{appendix:modules} for details on the constructions above.

The coherence data for the bicategory $\Alg$ arise as follows
\begin{itemize}
\item \emph{associators}: for $_{A}M_{B}$, $_{B}N_{C}$ and $_{C}P_{D}$, the associator isomorphism
\begin{equation}
\alpha_{M,N,P}:P\circ(N\circ{M})\xrightarrow{\simeq}(P\circ N)\circ{M}
\end{equation}
is given by the canonical isomorphism $(M\otimes_{B}N)\otimes_{C}P\simeq M\otimes_{B}(N\otimes_{C}P)$ of tensor products of bimodules;
\item \emph{unitors}: for any algebra $A$, the unit 1-morphism $1_{A}:A\to{A}$ is given by $A$ itself regarded as a $(A,A)$-bimodule; for any $(A,B)$-bimodule $M$, the left and right unitor isomorphisms 
\begin{equation}
1_{B}\circ{M}\simeq{M}\simeq{M}\circ 1_{A}
\end{equation}
are given by the canonical isomorphisms $M\otimes_{B}B\simeq{M}\simeq A\otimes_{A}M$.
\end{itemize}
The isomorphisms above satisfy the compatibility diagrams for the coherence data of a bicategory.
\begin{rmk}
Our notation for the Morita bicategory of algebras differs from the one used in \cite{lurieTQFT}.
\end{rmk}
Recall the following 
\begin{definition}
Let $\calb$ be a bicategory. A 1-morphism $f:x\to{y}$ is called an \emph{equivalence} if there exists a 1-morphism $g:y\to{x}$, and invertible 2-morphisms
\begin{equation}
\id_{x}\simeq g\circ{f},\quad f\circ{g}\simeq\id_{y}
\end{equation}
\end{definition}
Two objects $x$ and $y$ in a bicategory $\calb$ are called \emph{equivalent} if there exists an equivalence between $x$ and $y$.

The following is a well-known result.
\begin{prop}
Two algebras are equivalent as objects in $\Alg$ if and only if they are Morita equivalent.
\end{prop}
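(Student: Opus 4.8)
The plan is to unwind both sides of the claimed equivalence directly from the definitions. Recall that two algebras $A$ and $B$ are \emph{Morita equivalent} precisely when there is a $\fieldk$-linear equivalence of categories $\Mod_A \simeq \Mod_B$, while $A$ and $B$ are \emph{equivalent} as objects of $\Alg$ when there exist bimodules ${}_A M_B$ and ${}_B N_A$ together with bimodule isomorphisms $N \circ M \simeq 1_A$ and $M \circ N \simeq 1_B$, i.e.\ $M \otimes_B N \simeq A$ as $(A,A)$-bimodules and $N \otimes_A M \simeq B$ as $(B,B)$-bimodules. So the statement amounts to: such an invertible bimodule pair exists if and only if $\Mod_A$ and $\Mod_B$ are equivalent as $\fieldk$-linear categories.

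For the first direction, suppose ${}_A M_B$ is an equivalence 1-morphism with pseudo-inverse ${}_B N_A$. I would define the functor $- \otimes_A M : \Mod_A \to \Mod_B$ and its would-be inverse $- \otimes_B N : \Mod_B \to \Mod_A$. Using associativity of the relative tensor product and the given isomorphisms $M \otimes_B N \simeq A$, $N \otimes_A M \simeq B$, one checks that these functors are mutually inverse up to natural isomorphism: for a right $A$-module $P$, one has $(P \otimes_A M)\otimes_B N \simeq P \otimes_A (M \otimes_B N) \simeq P \otimes_A A \simeq P$, naturally in $P$, and symmetrically on the other side. (One should verify these natural isomorphisms are $\fieldk$-linear, which is automatic since everything in sight is built from $\fieldk$-linear tensor products.) Hence $\Mod_A \simeq \Mod_B$ and $A$, $B$ are Morita equivalent.

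For the converse, suppose $F : \Mod_A \xrightarrow{\simeq} \Mod_B$ is a $\fieldk$-linear equivalence with quasi-inverse $G$. The key observation is that $A$, viewed as a right module over itself, is a projective generator of $\Mod_A$ with $\mathrm{End}_A(A) \cong A^{\mathrm{op}}$ acting by left multiplication; set $M := F(A)$. Since $F$ is an equivalence it preserves projective generators and endomorphism algebras, so $M$ is a right $B$-module which is a projective generator with an induced left $A$-action (via $A^{\mathrm{op}} \cong \mathrm{End}_A(A) \cong \mathrm{End}_B(M) \supseteq$ the relevant action), making $M$ an $(A,B)$-bimodule. Symmetrically $N := G(B)$ is a $(B,A)$-bimodule. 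One then identifies $M \otimes_B N$ and $N \otimes_A M$ with $A$ and $B$ respectively as bimodules, using that $F \cong - \otimes_A M$ (because both are $\fieldk$-linear cocontinuous functors agreeing on the generator $A$, by Eilenberg--Watts / the Morita theorems) and similarly $G \cong - \otimes_B N$; applying the natural isomorphism $GF \cong \mathrm{id}$ to $A$ gives $A \otimes_A M \otimes_B N \simeq M \otimes_B N \simeq A$ compatibly with both actions, and dually for $B$. This produces the required equivalence data in $\Alg$.

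The main obstacle is the converse direction, specifically establishing carefully that an arbitrary $\fieldk$-linear equivalence $F$ is naturally isomorphic to a tensor-product functor $- \otimes_A M$ with $M = F(A)$ carrying the correct bimodule structure, and that the resulting tensor-hom identifications are compatible with \emph{both} the left and right actions (not merely the underlying vector spaces). This is essentially the classical Morita theorem, so I would either cite it (e.g.\ from a standard algebra reference) or sketch the Eilenberg--Watts argument: every $\fieldk$-linear right-exact coproduct-preserving functor out of $\Mod_A$ is isomorphic to tensoring with the bimodule obtained by evaluating on $A$. Handling the bimodule bookkeeping cleanly — tracking where the left $A$-action on $F(A)$ comes from and checking functoriality of all identifications — is the part that requires genuine care rather than formal manipulation. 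Since the proposition is stated as "well-known," a proof at the level of a precise citation plus the above outline should suffice.
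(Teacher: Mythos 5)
Your proposal is correct and is exactly the classical Morita/Eilenberg--Watts argument; the paper itself offers no proof, stating the proposition as well-known, so your write-up is essentially the justification the paper implicitly invokes. The only point worth flagging is that the paper's $\Mod_A$ consists of \emph{finite-dimensional} modules, so the usual ``cocontinuous'' hypothesis in Eilenberg--Watts should be replaced by additivity/right-exactness in the finite-dimensional setting (where the theorem still holds for finite-dimensional algebras); with that caveat, your outline plus a citation to standard Morita theory is entirely adequate.
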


The following notation will be used (hopefully) consistently throughout the paper.

If ${}_{A}M_B$ and ${}_{C}M{'}_{D}$ are bimodules, we denote with $\homm{_\fieldk}(M,M')$ the vector space of $\fieldk$-linear maps from $M$ to $M'$. Given bimodules ${}_{A}M_B$ and ${}_{D}N_B$, $\homm_{B}(M,N)$ denotes the set of right $B$-module morphisms, namely elements of $\hom{_\fieldk}(M,N)$ which, additionally, are compatible with the right $B$-action. We avoid completely the analogous notion for left modules, so that our notation for morphisms of right-modules is unambiguous. 

Given bimodules ${}_{A}M_B$ and ${}_{C}N_B$, the vector space $\homm_{B}(M,N)$ may naturally be equipped with a left $C$-action and a right $A$-action. Indeed, these are defined as 
\begin{align}
\begin{split}
\homm_{B}(M,N)\times{A}& \to \homm_{B}(M,N)\\
 (f,a) & \mapsto fa: x \mapsto f(ax)
 \end{split}
\end{align}
and
\begin{align}
\begin{split}
C\times\homm_{B}(M,N)& \mapsto \homm_{B}(M,N)\\
 (c,f) & \mapsto cf: x \mapsto cf(x),
 \end{split}
\end{align}
respectively. We write ${}_{C}\homm_{B}(M,N)_{A}$ to indicate this bimodule structure, and we always assume these left and right actions unless otherwise indicated.

Recall that from any algebra $A$ we obtain an opposite algebra $A^{op}$. This is the $\fieldk$-algebra which has the same underlying vector space as $A$, and the same unit, but where the multiplication is inverted. For notational ease, we denote multiplication using juxtaposition when it is clear which algebra $A$ is at play; the notation $\star$ indicates when we are multiplying in the opposite algebra. 

Finally, recall that any left $A$-module can be viewed as right $A^{op}$-module. Indeed, for $M$ a left $A$-module we can consider the following right $A^{op}$-action 
\begin{align}
\begin{split}
M\times{A^{op}} & \to M\\
(m,a) &\mapsto a\cdot{m}
\end{split}
\end{align}
It is readily checked that this does indeed define a right action. In a similar fashion, bimodules ${}_{A}M_B$ may be viewed as bimodules ${}_{B^{op}}M_{A^{op}}$. We will make this kind of switch tacitly when no confusion is to be feared. 
\section{Finite categories and dualisability}\label{section:finite}
In this section we provide a review of well-known material, mainly following \cite{tenscat, finitetensor, DSPS}. This will be useful both to give a precise characterisation of the bifunctor $\Rep$, and in view of the general setting of Section \ref{section:general}.
\subsection{Finite linear categories} 
For $\fieldk$ a fixed ground field, recall that a \emph{linear category} is an abelian category enriched over $\Vectt_{\fieldk}$, the symmetric monoidal category of $\fieldk$-vector spaces, not necessarily finite-dimensional. A \emph{linear functor} is an additive functor which is also a functor of $\Vectt_{\fieldk}$-enriched categories.
\begin{definition}A linear category $\calc$ is called \emph{finite} if:
\begin{itemize}
\item $\calc$ has finite-dimensional vector spaces as spaces of morphisms;
\item every object of $\calc$ has finite length;
\item $\calc$ has enough projectives; and
\item there are finitely many isomorphism classes of simple objects.
\end{itemize}
\end{definition}
\begin{ex}
An example of a finite linear category is $\Vect_{\fieldk}$, the category of finite-dimensional $\fieldk$-vector spaces.
\end{ex}
The following proposition is important for recognizing finite linear categories.
\begin{prop} A linear category $\calc$ is finite if and only if it is equivalent to the category $\Mod_{A}$ of finite-dimensional (right) modules over a finite-dimensional $\fieldk$-algebra $A$.
\end{prop}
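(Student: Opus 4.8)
The plan is to prove the two directions of the equivalence separately, with the substance lying in the ``only if'' direction. For the easy ``if'' direction, I would verify directly that for a finite-dimensional $\fieldk$-algebra $A$, the category $\Mod_A$ of finite-dimensional right $A$-modules satisfies the four axioms: hom-spaces are finite-dimensional subspaces of $\homm_\fieldk(M,N)$ since $M,N$ are finite-dimensional; every finite-dimensional module has finite length because a composition series cannot be longer than the $\fieldk$-dimension; the indecomposable projectives are the summands of $A_A$, and every module is a quotient of a free module $A^{\oplus n}$, so there are enough projectives; and the simple modules are (up to isomorphism) the composition factors of $A_A$, hence finitely many. Since finiteness of a linear category is clearly invariant under linear equivalence, this handles any $\calc \simeq \Mod_A$.

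For the ``only if'' direction, I would use the standard projective-generator (Morita-theoretic) argument. Given a finite linear category $\calc$, first choose a \emph{projective generator} $P$: take one indecomposable projective cover $P_i$ for each of the finitely many isomorphism classes of simple objects $S_i$ (projective covers exist because $\calc$ has enough projectives and objects have finite length), and set $P := \bigoplus_i P_i$. Then set $A := \End_{\calc}(P)^{op}$, which is a finite-dimensional $\fieldk$-algebra since $\Hom_{\calc}(P,P)$ is finite-dimensional. The functor $F := \Hom_{\calc}(P,-)\colon \calc \to \Mod_A$ is linear, and the goal is to show it is an equivalence.

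The main work — and the step I expect to be the principal obstacle — is proving that $F = \Hom_{\calc}(P,-)$ is fully faithful and essentially surjective. Fully faithfulness on the subcategory of projectives is formal: $\Hom_{\calc}(P^{\oplus m}, P^{\oplus n}) \cong \Hom_A(A^{\oplus m}, A^{\oplus n})$ because $F$ takes $P$ to $A_A$ and preserves finite direct sums. To bootstrap to all of $\calc$ one uses that $P$ is a generator, so every object $X$ admits a presentation $P^{\oplus m} \to P^{\oplus n} \to X \to 0$ by projectives (here one needs that $\calc$ has enough projectives \emph{and} that $P$ is a generator, which follows since the $S_i$ exhaust the simples and every projective is a summand of a sum of the $P_i$); applying $F$, which is right exact on such presentations, and using the five lemma together with fully faithfulness on projectives yields fully faithfulness on all of $\calc$. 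Essential surjectivity then follows because every finite-dimensional $A$-module likewise has a presentation by free modules $A^{\oplus m}\to A^{\oplus n}$, which is the image under $F$ of a map $P^{\oplus m}\to P^{\oplus n}$, whose cokernel in $\calc$ maps to the given module. The delicate points to get right are the existence of projective covers (needing finite length plus enough projectives, i.e. a Krull–Schmidt / Fitting argument) and the exactness bookkeeping ensuring $F$ interacts correctly with the chosen presentations; I would cite \cite{DSPS} or \cite{finitetensor} for the precise form of these facts rather than reproving them in full.
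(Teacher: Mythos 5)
The paper gives no proof of this proposition: it is stated as part of a review of ``well-known material'' and implicitly deferred to the cited references, so there is nothing to compare against line by line. Your argument is the standard recognition theorem (projective covers of the finitely many simples, $P=\bigoplus_i P_i$, $A=\End_\calc(P)^{op}$, $F=\Hom_\calc(P,-)$) and is essentially correct and complete at the level of detail one would expect here. One phrasing to tighten: you say $F$ is ``right exact on such presentations,'' but the actual point is that $\Hom_\calc(P,-)$ is \emph{exact} precisely because $P$ is projective (it is always left exact, and projectivity supplies surjectivity on the right), which is what lets you transport the presentation $P^{\oplus m}\to P^{\oplus n}\to X\to 0$ to $A^{\oplus m}\to A^{\oplus n}\to FX\to 0$ and run the left-exactness-of-$\Hom(-,Y)$ comparison. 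Also check your $\End$ versus $\End^{op}$ convention against the side on which you want modules: with composition as multiplication, $\Hom_\calc(P,X)$ is naturally a \emph{right} $\End_\calc(P)$-module via precomposition, so the ${}^{op}$ may be unnecessary depending on conventions; this is cosmetic, not a gap.
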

Recall that an additive functor between abelian categories is called \emph{left exact} if it sends left exact sequences to left exact sequences. The notion of a right exact functor is similar.

We can consider now the 2-category of finite linear categories.
\begin{definition} For a fixed ground field $\fieldk$, the 2-category $\lincat_{\fieldk}$ has:
\begin{itemize}
\item finite linear categories as objects;
\item right exact functors as 1-morphisms; and
\item natural transformations as 2-morphisms.
\end{itemize}
\end{definition}
We can consider $\lincat_{\fieldk}$ as a linearization of $\Alg$ via the \emph{representation bifunctor}. More precisely, consider the bifunctor
\begin{equation}
\Rep:\Alg\to\lincat_{\fieldk}
\end{equation}
defined as follows:
\begin{itemize}
\item to a finite-dimensional algebra $A$ it assigns the category $\Mod_{A}$;
\item to a finite-dimensional $(A,B)$-bimodule $_{A}M_{B}$ it assigns the right exact functor $(-)\otimes_{A}M_{B}:\Mod_{A}\to\Mod_{B}$; and
\item to an intertwiner $f$ between $_{A}M_{B}$ and $_{A}N_{B}$ it assigns the corresponding natural transformation between $(-)\otimes_{A}M_{B}$ and $(-)\otimes_{A}N_{B}$. 
\end{itemize}
Notice that the various isomorphisms needed to make $\Rep$ into a bifunctor arise canonically from the properties of the tensor product of modules.

As pointed out in \cite{bartlettschom2}, following \cite{DSPS,schompries} one obtains 
\begin{prop}
The bifunctor $\Rep$ is an equivalence of bicategories
\end{prop}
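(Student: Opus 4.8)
The plan is to exhibit $\Rep:\Alg\to\lincat_{\fieldk}$ as an equivalence of bicategories by verifying the two standard criteria: essential surjectivity on objects, and being a local equivalence (i.e.\ inducing equivalences on all hom-categories). For essential surjectivity, I would invoke the proposition stated above that a linear category $\calc$ is finite if and only if it is equivalent to $\Mod_{A}$ for some finite-dimensional $\fieldk$-algebra $A$; since $\Rep(A)=\Mod_{A}$ by definition, every object of $\lincat_{\fieldk}$ is equivalent to one in the image, which is exactly essential surjectivity.

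For the local statement, I would fix finite-dimensional algebras $A$ and $B$ and analyse the functor
\begin{equation}
\Rep_{A,B}:\Alg(A,B)\longrightarrow\lincat_{\fieldk}(\Mod_{A},\Mod_{B})
\end{equation}
sending ${}_{A}M_{B}$ to $(-)\otimes_{A}M_{B}$ and an intertwiner to the induced natural transformation. That this functor is fully faithful on $2$-morphisms amounts to the classical fact that a natural transformation between functors of the form $(-)\otimes_{A}M_{B}$ and $(-)\otimes_{A}N_{B}$ is determined by and determines an $(A,B)$-bimodule map $M\to N$ --- concretely, evaluating at the object $A\in\Mod_{A}$ recovers a linear map $M\cong A\otimes_{A}M\to A\otimes_{A}N\cong N$, and naturality forces it to be a bimodule intertwiner; this gives an inverse to $\Rep_{A,B}$ on hom-sets. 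The essential surjectivity of $\Rep_{A,B}$ is the Eilenberg--Watts theorem: any right exact functor $\Mod_{A}\to\Mod_{B}$ is naturally isomorphic to $(-)\otimes_{A}M_{B}$ for $M:=F(A)$ with its canonical $(A,B)$-bimodule structure coming from the left $A$-action by functoriality and the right $B$-action from the target. One then checks this isomorphism is compatible with the coherence data, which is routine since all the structure isomorphisms of $\Rep$ were defined from the universal properties of tensor products.

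Finally, I would assemble these: a pseudofunctor that is essentially surjective on objects and a local equivalence is a biequivalence (this is the bicategorical analogue of the ``fully faithful $+$ essentially surjective $\Rightarrow$ equivalence'' principle, valid because one can choose quasi-inverses hom-wise and patch them). Alternatively, and perhaps more in the spirit of the paper, one simply cites \cite{DSPS,schompries} via \cite{bartlettschom2} as indicated; but the self-contained argument above is short enough to include. The main obstacle I expect is purely bookkeeping: verifying that the Eilenberg--Watts natural isomorphisms are coherent with the associators and unitors of $\Alg$ and $\lincat_{\fieldk}$, and that the local inverse functors can be chosen compatibly so as to upgrade the collection of local equivalences into an honest pseudofunctor quasi-inverse. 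None of this is conceptually hard, but it is the part that requires care rather than the invocation of a named theorem.
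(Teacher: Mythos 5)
Your argument is correct, but it is genuinely more than what the paper does: the paper offers no proof at all, simply recording the statement as a consequence of \cite{DSPS,schompries} via \cite{bartlettschom2}. Your self-contained route --- essential surjectivity on objects from the characterisation of finite linear categories as module categories, local fully-faithfulness by evaluating a natural transformation at the object $A\in\Mod_{A}$, local essential surjectivity by Eilenberg--Watts, and then the bicategorical Whitehead theorem --- is exactly the argument underlying those references, so nothing is lost and the reader gains an explicit proof. One point deserves a sentence of care that your write-up glosses over: the classical Eilenberg--Watts theorem is stated for cocontinuous functors on categories of \emph{all} modules, whereas here the functors are merely right exact and the categories consist of finite-dimensional modules. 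The argument still works because every object of $\Mod_{A}$ is finitely presented (a cokernel of a map $A^{m}\to A^{n}$) and a right exact linear functor is automatically additive, hence preserves these finite colimits; this is what lets you conclude $F\simeq(-)\otimes_{A}F(A)$ and, in the fully-faithfulness step, that a natural transformation is determined by its component at $A$. With that remark added, and the (admittedly tedious) check that the Eilenberg--Watts isomorphisms respect the associators and unitors, your proof is complete and could stand in place of the citation.
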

\begin{rmk}
The bifunctor $\Rep$ is actually an equivalence of symmetric monoidal bicategories. See \cite{schompries} for details on symmetric monoidal structures on bicategories.
\end{rmk}
\subsection{Full dualisability}We now recall some basic notions concerning adjoints for 1-morphisms in bicategories and full-dualisability.

Let $\calb$ be a bicategory.
\begin{definition}
A 1-morphism $f:x\to{y}$ in $\calb$ \emph{admits a right adjoint} if there exists a 1-morphism $g:y\to{x}$, and 2-morphisms $\epsilon:f\circ g\to\id_{y}$ and $\eta:\id_{x}\to{g\circ{f}}$ satisfying the triangle identities. 
\end{definition}
Similarly, we have the notion of a left adjoint of a 1-morphism.

An $\emph{adjunction}$ $f\dashv{g}$ is a collection $(f,g,\epsilon,\eta)$ such that $g$ is a right adjoint to $f$ via $\epsilon$ and $\eta$. We say that $f\dashv{g}$ is an \emph{adjoint equivalence} if $\epsilon$ and $\eta$ are invertible 2-morphisms.

The following theorem will be useful in later sections.
\begin{thm}[\cite{gurski}]\label{thm:adjointequiv}
Let $\calb$ be a bicategory, and let $f$ be an equivalence in $\calb$. Then $f$ is part of an adjoint equivalence $f\dashv{g}$.
\end{thm}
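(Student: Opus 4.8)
The statement to prove is Theorem~\ref{thm:adjointequiv}: every equivalence $f$ in a bicategory $\calb$ can be promoted to an adjoint equivalence $f\dashv g$. The plan is to start from the data witnessing that $f$ is an equivalence, namely a pseudo-inverse $g\colon y\to x$ together with invertible $2$-morphisms $\eta_0\colon \id_x\Rightarrow g\circ f$ and $\epsilon_0\colon f\circ g\Rightarrow \id_y$, and then keep $f$, $g$, and $\eta_0$ fixed while \emph{modifying} $\epsilon_0$ to a new invertible $2$-morphism $\epsilon$ so that the pair $(\eta_0,\epsilon)$ satisfies both triangle identities. This is the standard ``improvement trick'' for adjunctions, transported from $1$-category theory to the bicategorical setting.

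First I would write down the two composite $2$-morphisms appearing in the triangle identities for $(f,g,\epsilon_0,\eta_0)$: the composite $f\xrightarrow{\sim} f\circ\id_x \xRightarrow{f\cdot\eta_0} f\circ(g\circ f)\xrightarrow{\sim}(f\circ g)\circ f\xRightarrow{\epsilon_0\cdot f}\id_y\circ f\xrightarrow{\sim}f$, and the analogous composite $g\Rightarrow g$ built from $\eta_0\cdot g$ and $g\cdot\epsilon_0$. Each of these is an invertible $2$-endomorphism of an equivalence, hence (using that whiskering by an equivalence reflects invertibility, or directly) there is an invertible $2$-morphism $\phi\colon f\Rightarrow f$ measuring the failure of the first triangle identity. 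Then I would define the corrected counit by precomposing $\epsilon_0$ with the inverse of the whiskering of $\phi$, i.e.\ set $\epsilon := \epsilon_0 \circ ((\phi^{-1}\cdot g)\text{-type correction})$, arranged via the coherence isomorphisms so that the first triangle identity now holds on the nose. This uses only associators, unitors, interchange, and invertibility of $\eta_0,\epsilon_0$.

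Next I would verify that with this new $\epsilon$ the \emph{second} triangle identity also holds. This is the delicate bookkeeping step: in the $1$-categorical proof it follows from a short diagram chase, and here one must run the same chase but inserting associator and unitor $2$-cells and invoking the coherence theorem for bicategories (or Mac Lane-style pasting arguments) to know that all ways of reparenthesising agree. Finally, $\epsilon$ is invertible because it was obtained from the invertible $\epsilon_0$ by composing with invertible coherence cells and the invertible whiskering of $\phi$, so $(f,g,\epsilon,\eta_0)$ is an adjoint equivalence.

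\textbf{Main obstacle.} The conceptual content is entirely classical; the real work — and the main obstacle — is the coherence bookkeeping in the second paragraph and especially the verification of the second triangle identity in the third paragraph, where one must track a nontrivial pasting of associators, unitors, and the interchange law and confirm that everything collapses as in the strict case. Since $\calb$ is a general (not necessarily strict) bicategory, one cannot simply omit these cells; the cleanest route is to cite the bicategorical coherence theorem to reduce the computation to the case $\calb=$ a $2$-category and then perform the standard $1$-categorical argument fibrewise on hom-categories. For the present paper this theorem is only invoked as input, so it suffices to refer to \cite{gurski} for the full details.
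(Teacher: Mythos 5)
The paper does not prove this statement --- it is quoted directly from \cite{gurski} --- but your sketch is precisely the standard ``improvement'' argument given in that reference: fix $f$, $g$ and the unit, correct the counit by the (invertible) failure of one triangle identity, and then deduce the other triangle identity by a whiskering/naturality chase carried out in the hom-categories. Your plan correctly identifies both the construction and the genuinely delicate step (verifying the second triangle identity amid the coherence cells), so it is a faithful outline of the cited proof and consistent with how the present paper uses the result.
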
 
\begin{rmk}
As remarked in \cite{gurski}, the theorem above guarantees something stronger than the existence of an adjoint equivalence. Indeed, given an equivalence $f:x\to{y}$ in $\calb$, a (pseudo) inverse $g$ and a 2-isomorphism $\alpha:f\circ{g}\simeq\id_{y}$, there exists a \emph{unique} adjoint equivalence $(f,g,\epsilon,\eta)$ with $\epsilon = \alpha$.
\end{rmk}
\begin{ex}
Let $\calc$ be a monoidal category. If we regard $\calc$ as a bicategory with a single object, then a 1-morphism $x$ admits a right (resp. left) adjoint if and only if $x$ admits a left (resp. right) dual as an object in $\calc$. 
\end{ex}
\begin{definition}
A bicategory $\calb$ is said to \emph{admit duals for 1-morphisms} if any 1-morphism admits a right and a left adjoint. 
\end{definition}
In the following we recall the definition of duals in symmetric monoidal bicategories; see \cite{schompries} for details.
\begin{definition}
Let $(\calb, \otimes,\unit)$ be a symmetric monoidal bicategory. An object $x\in\calb$ is \emph{dualisable} if there exists $x^{*}\in\calb$ and 1-morphisms $e:x\otimes x^{*}\to\unit$ and $c:\unit \to x^{*}\otimes{x}$ sastisfying the zig-zag identities up to 2-isomorphisms.
\end{definition} 
\begin{rmk}The statement regarding the zig-zag equations means that for any dualisable object $x\in\calb$ there are isomorphisms
\begin{align}
\begin{split}
(e\otimes\id_{x})\circ(\id_{x}\otimes{c})&\simeq\id_{x}\\
(\id_{x^{*}}\otimes e)\circ(c\otimes{\id_{x^{*}}})&\simeq\id_{x^{*}} .
\end{split}
\end{align}
See for instance \cite{lurieTQFT}.
\end{rmk}
\begin{definition}
A symmetric monoidal bicategory $\calb$ is said to \emph{admit duals for objects} if any object is dualisable.
\end{definition}
We can combine the two requests on a bicategory via the following
\begin{definition}
A symmetric monoidal bicategory $\calb$ is said to be \emph{fully dualisable} if it admits duals for objects and 1-morphisms. 
\end{definition}
Given a symmetric monoidal bicategory $\calb$, we denote with $\calb^{fd}$ the maximal sub-bicategory of $\calb$ which is fully dualisable. An object in $\calb^{fd}$ is called \emph{fully dualisable}.

We now discuss the fully dualisable part of the (symmetric monoidal\footnote{We will not indulge in the gory details of their symmetric monoidal products.}) bicategories of interest for the present work, namely $\Alg$ and $\lincat_{\fieldk}$; our main reference will be Appendix A of \cite{bartlettschom2}.

From \cite{davydovich,schompries} it follows that $\Alg^{fd}$ corresponds to the full sub-bicategory of $\Alg$ spanned by semi-simple\footnote{Semi-simplicity arises from the assumption that $\fieldk$ has characteristic 0; \emph{separability} is a suitable notion otherwise.} (finite-dimensional) $\fieldk$-algebras. Note that any finite-dimensional module over a semi-simple finite-dimensional algebra is automatically projective; see Appendix \ref{appendix:modules}.

To discuss the fully dualisable part of $\lincat_{\fieldk}$, we need first the following
\begin{definition}A Kapranov-Voevodsky (KV) vector space is a finite linear category which is semi-simple and equivalent to $\Vectt_{\fieldk}^{n}$ for some $n$.
\end{definition}
From \cite{bartlettschom1} we have that $\lincat_{\fieldk}^{fd}$ is the full\footnote{Note that any right exact functor between semi-simple abelian categories is automatically left exact.} sub-2-category of $\lincat_{\fieldk}$ spanned by KV-vector spaces. For simplicity we use $\KV$ to denote $\lincat_{\fieldk}^{fd}$.

From the fact that $\Rep$ is a symmetric monoidal biequivalence one has
\begin{prop}
The bifunctor $\Rep$ induces by restriction an equivalence of bicategories between $\Alg^{fd}$ and $\KV$.
\end{prop}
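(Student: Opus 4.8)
The plan is to bootstrap from two facts already recorded in this section: that $\Rep:\Alg\to\lincat_{\fieldk}$ is an equivalence of symmetric monoidal bicategories, and that $\Alg^{fd}$ and $\KV=\lincat_{\fieldk}^{fd}$ are, by definition, the maximal fully dualisable sub-bicategories of $\Alg$ and of $\lincat_{\fieldk}$ --- the former spanned by the semisimple finite-dimensional $\fieldk$-algebras, the latter by the KV-vector spaces. The essential observation is that full dualisability is invariant under symmetric monoidal biequivalences: if $F:\calb\to\calb'$ is such a biequivalence, then an object $x\in\calb$ is dualisable iff $F(x)$ is (apply $F$, resp.\ a chosen pseudo-inverse of $F$, to the evaluation and coevaluation $1$-morphisms and to the zig-zag $2$-isomorphisms exhibiting a dual pair), and a $1$-morphism admits a right, resp.\ left, adjoint iff its image does, since any pseudofunctor preserves adjunctions. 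Hence $F$ restricts to a biequivalence $\calb^{fd}\xrightarrow{\simeq}(\calb')^{fd}$.

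Applying this with $F=\Rep$ immediately yields a biequivalence $\Rep|_{\Alg^{fd}}:\Alg^{fd}\to\lincat_{\fieldk}^{fd}=\KV$. To match this with the restriction described in the statement it remains only to check compatibility on objects. If $A$ is semisimple then, by the Artin--Wedderburn theorem (using that $\fieldk$ is algebraically closed of characteristic $0$), $A\cong\prod_{i=1}^{n}M_{d_i}(\fieldk)$, so $\Mod_{A}\simeq\prod_{i=1}^{n}\Mod_{M_{d_i}(\fieldk)}\simeq\Vectt_{\fieldk}^{\,n}$ is a KV-vector space; conversely any KV-vector space is equivalent to $\Vectt_{\fieldk}^{\,n}\simeq\Mod_{\fieldk^{n}}$ for some $n$, with $\fieldk^{n}$ semisimple, so $\Rep|_{\Alg^{fd}}$ is essentially surjective onto $\KV$. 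Full faithfulness on each hom-category is inherited directly from $\Rep$, since $\Alg^{fd}\subseteq\Alg$ and $\KV\subseteq\lincat_{\fieldk}$ are full sub-bicategories.

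I do not anticipate a genuine obstacle here; the one step deserving a sentence of proof is the stability of full dualisability under biequivalence. This can even be bypassed: $\Rep$ visibly carries semisimple algebras to KV-vector spaces and is an equivalence on all hom-categories, so the entire content of the proposition is the essential surjectivity of the restriction, and that is the Artin--Wedderburn computation above together with the observation that, by definition, every KV-vector space already lies in the essential image of the algebras $\fieldk^{n}$ under $\Rep$.
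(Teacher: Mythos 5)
Your argument is correct and follows essentially the same route as the paper, which justifies the proposition in one sentence by the fact that a symmetric monoidal biequivalence restricts to a biequivalence of the maximal fully dualisable sub-bicategories. Your additional Artin--Wedderburn verification that semisimple algebras land on KV-vector spaces (and the converse essential surjectivity) just makes explicit the object-level identification the paper takes from the cited characterisations of $\Alg^{fd}$ and $\lincat_{\fieldk}^{fd}$.
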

The proposition above is guaranteed by the fact that any symmetric monoidal bifunctor $\cala^{fd}\to\calb$ factors uniquely through $\calb^{fd}$, and by the maximality property of fully dualisable subcategories.
\begin{rmk}
The definition of a Kapranov-Voevodsky vector space provided above is slightly different from that in \cite{kv}; see Section \ref{section:general} for comments.
\end{rmk}
\section{Duality involutions and functors}\label{section:duality}
In this section we briefly recall the notion of a duality involution on a bicategory as introduced in \cite{shulman}, which we also use as the main source for the details needed in the present section.

In the following, $\cala$ and $\calb$ denote bicategories. 
\begin{definition}
Let $\cala$ be a bicategory. Then $\cala^{co}$ denotes the bicategory with the same objects as $\cala$, and
\begin{equation}
\cala^{co}(x,y):=\cala(x,y)^{op},\quad \forall x,y\in\cala
\end{equation} .
\end{definition}
In other words, $\cala^{co}$ is the bicategory obtained from $\cala$ by reversing 2-morphisms.

One has that any bifunctor $F:\cala\to\calb$ induces a bifunctor $F^{co}:\cala^{co}\to\calb^{co}$, defined in the obvious way, and similarly for natural transformations and their modifications\footnote{Beware of the fact that $\theta^{co}:\gamma^{co}\to{\eta}^{co}$ for a modification $\theta:\eta\to\gamma$ .}.

\begin{definition}\label{def:duality}
A \emph{weak duality involution} on $\cala$ is the following collection of data:
\begin{itemize}
\item a pseudofunctor $(-)^{\circ}:\cala^{co}\to\cala$;
\item a pseudonatural adjoint equivalence
\begin{displaymath}
\begin{tikzcd}
\cala \arrow[rr, equal, ""{name=U, below}] \arrow[rd, "((-)^{\circ})^{co}"']& & \cala\\
& \cala^{co} \arrow[Rightarrow, from=U, "\et"] \arrow[ru, "(-)^{\circ}"']&
\end{tikzcd};
\end{displaymath}
and
\item an invertible modification $\zeta$, given in components by 
\begin{equation}
\zeta_{x}:\et_{x^{\circ}}\xrightarrow{\simeq}(\et_{x})^{\circ},\quad \forall x \in \cala
\end{equation}
satisfying a compatibility diagram; see \cite{shulman}.
\end{itemize}
\end{definition}
In the case in which $(-)^{\circ}$ is a strict\footnote{This requires $\cala$ to be a strict bicategory.} bifunctor, $\et$ is a strict binatural isomorphism, and $\zeta$ is the identity modification, we have a \emph{strong duality involution} on $\cala$. Moreover, if in the case before $\et$ is the identity as well, we have a \emph{strict duality involution} on $\cala$.

A prototypical example of a (strict) duality involution is provided by taking the opposite category. Indeed, denote with $\Cat$ the 2-category of small categories, and consider the following 2-functor
\begin{equation}
(-)^{op}:\Cat^{co} \to\Cat\\
\end{equation}
defined as follows:
\begin{itemize}
\item to a category $\calc$ it assigns the opposite category $\calc^{op}$;
\item to a functor $F$ between $\calc$ and $\cald$ it assigns the opposite functor $F^{op}$ between $\calc^{op}$ and $\cald^{op}$; and
\item to a natural transformation $\epsilon$ between $F$ and $G$ it assigns the opposite natural transformation $\epsilon^{op}$ between $G^{op}$ and $F^{op}$.
\end{itemize}

Note that $(-)^{op}$ is defined on $\Cat^{co}$ since taking the opposite of a natural transformation between functors changes its direction.

Since taking the opposite twice is strictly the identity operation, we can choose the components of $\et$ to be the identity 1-cells; moreover, we can choose the 2-cells witnessing the naturality to be identity 2-cells as well. Finally, if we choose the components of $\zeta$ to be identity 2-cells also, one can easily show that the above data satisfy the required compatibility diagram. Hence, we have that $(-)^{op}$ canonically provides a strict duality involution on $\Cat$.

It is interesting to notice at this point that taking the opposite category does \emph{not} provide a strict duality involution on $\lincat_{\fieldk}$. Indeed, though the opposite category of a finite linear category is again a finite linear category, the opposite of a right exact functor is \emph{left} exact. On the other hand, $(-)^{op}$ does provide a strict duality involution on $\KV$, since morphisms between KV-vector spaces are exact functors.
\begin{definition}\label{def:dualityfunctor}
Let $\cala$ and $\calb$ be bicategories equipped with a weak duality involution. A \emph{duality pseudofunctor} between $\cala$ and $\calb$ is a pseudofunctor $F:\cala\to\calb$ equipped with
\begin{itemize}
\item a pseudonatural adjoint equivalence
\begin{displaymath}
\begin{tikzcd}
\cala^{co} \arrow[d, "(-)^{\circ}"']\arrow[r, "F^{co}"] & \calb^{co} \arrow[d, "(-)^{\circ}"] \arrow[shorten <=10pt,shorten >=10pt, Rightarrow, to=2-1, "\ii"] \\
\cala  \arrow[r, "F"'] & \calb 
\end{tikzcd};
\end{displaymath}
and
\item an invertible modification $\theta$ whose components are 2-morphisms in $\calb$ of the following form
\begin{displaymath}
\begin{tikzcd}
(Fx)^{\circ\circ}\arrow[r, "(\ii_{x})^{\circ}" {name=U}] & (F(x^{\circ}))^{\circ}\arrow[d, "\ii_{x^{\circ}}"]\\
Fx \arrow[u, "\et_{Fx}" ]\arrow[r, "F(\et_{x})"'{name=D}] & F(x^{\circ\circ}) \arrow[shorten <=10pt,shorten >=10pt, Rightarrow, from=U, to=D, "\theta_{x}"]
\end{tikzcd}
\quad,\:\forall x\in\cala ,
\end{displaymath}
satisfying a compatibility diagram involving $\zeta$, $\et$ and $\ii$; see \cite{shulman}.
\end{itemize}
\end{definition}
Similar to the case of a weak duality involution, we have the notion of a \emph{strong duality pseudofunctor} and \emph{strict duality pseudofunctor}.

The notion of a duality pseudofunctor allows to formulate the following theorem, which is one the main results in \cite{shulman}.
\begin{thm} Let $\cala$ be a bicategory with a weak duality involution. Then there exists a 2-category $\cala'$ with a strict duality involution and a duality pseudofunctor $\cala\to\cala'$ that is a biequivalence.
\end{thm}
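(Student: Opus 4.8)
The plan is to apply the standard "strictification via local strictification + coherence" philosophy, using the fact that every bicategory is biequivalent to a strict 2-category, but now carrying the duality involution data along. First I would recall that there is a biequivalence $\mathsf{st}\colon \cala \to \cala''$ to a strict 2-category $\cala''$ (for instance $\cala''$ can be taken to be the image of $\cala$ under the Yoneda-type embedding into the strict 2-category of pseudofunctors $\cala^{op}\to\Cat$, or via any proof of the bicategorical coherence theorem). The key point is that this construction is natural enough that the pseudofunctor $(-)^{\circ}\colon \cala^{co}\to\cala$ transports along $\mathsf{st}$ to a pseudofunctor on $\cala''$, and then one would want to further replace this transported pseudofunctor by a genuine strict $2$-functor which squares to the identity strictly.

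The main structural steps I would carry out are: (1) produce a strict $2$-category $\cala'$ and a biequivalence $F\colon \cala\to\cala'$; (2) transport the weak duality involution $((-)^{\circ},\et,\zeta)$ across $F$, obtaining a weak duality involution on $\cala'$ — here one uses that biequivalences can be composed with adjoint equivalences (invoking Theorem \ref{thm:adjointequiv} to rigidify the needed pseudo-inverses) and that pasting the relevant squares yields the required pseudonatural adjoint equivalence and modification $\zeta'$; (3) strictify the duality pseudofunctor structure on $F$ itself, i.e. equip $F$ with the data $(\ii,\theta)$ of Definition \ref{def:dualityfunctor}, which is essentially forced by the transport in step (2); and (4) upgrade the weak duality involution on $\cala'$ to a \emph{strict} one. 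For step (4) the idea is to absorb the pseudofunctor $(-)^{\circ}$ and the equivalence $\et$ into a redefinition of $\cala'$: one can form a new $2$-category whose data explicitly includes a chosen opposite for each object, morphism and $2$-morphism, so that the involution becomes literally "relabelling", and check that this new $2$-category is still biequivalent to $\cala'$ (hence to $\cala$) via a duality pseudofunctor. This is the analogue, in the duality-involution setting, of how one rigidifies a monoidal structure to a strict one.

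The hard part will be step (4) together with verifying the coherence axioms throughout. Transporting structure across a biequivalence is routine in principle but the weak duality involution involves a modification $\zeta$ with its own compatibility diagram, and the duality pseudofunctor involves a modification $\theta$ with a compatibility diagram relating $\zeta$, $\et$ and $\ii$; one must check that the transported/strictified data satisfy these. The genuinely delicate issue is making the involution \emph{strict} rather than merely strong: even after strictifying the ambient bicategory, the operation "take the opposite" composed with itself is only pseudonaturally, not strictly, the identity unless one rebuilds the $2$-category so that double-opposite is syntactically the identity. I would handle this by a careful bookkeeping construction (a "marked" or "signed" version of $\cala'$ where objects come with formal $\pm$ decorations and the involution flips the sign), and the obstacle is checking that all the coherence $2$-cells one introduces can be chosen compatibly — this is where Shulman's argument in \cite{shulman} does the real work, and I would follow that reference for the detailed verification of the pasting diagrams rather than reproduce them here.

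Overall, then, my proof proposal is: reduce to the strict case by a coherence theorem, transport the duality involution and simultaneously endow the biequivalence with duality-pseudofunctor structure, and finally perform an explicit rigidification of the involution so that it becomes strict, with the coherence checks delegated to (and following the pattern of) the computations in \cite{shulman}. I expect the only conceptually new content beyond ordinary bicategorical strictification to be concentrated in that last rigidification step and its coherence bookkeeping.
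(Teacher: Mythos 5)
First, note that the paper does not actually prove this theorem: it is quoted from \cite{shulman}, and the text explicitly says that the proof there proceeds via the theory of 2-monads and representable multicategories (a weak duality involution is exhibited as a pseudoalgebra structure for a suitable 2-monad, and the general coherence theorem for such pseudoalgebras produces the strictification together with the duality pseudofunctor). Your proposal takes a genuinely different, more hands-on route --- strictify the ambient bicategory, transport the involution, then rigidify it by rebuilding the 2-category --- which is a legitimate alternative strategy in outline. What the 2-monadic approach buys is precisely that the ``rigidification'' and all of its coherence obligations come for free from an already-established general theorem; what your approach would buy, if completed, is an explicit and elementary construction of $\cala'$.

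The genuine gap is that your step (4) is the entire content of the theorem, and as written it does not go through. Steps (1)--(3) only ever produce a \emph{weak} duality involution on a strict 2-category, so nothing has been gained toward strictness of the involution itself. For the signed construction, if you set $\cala'\bigl((x,\epsilon),(y,\delta)\bigr)=\cala(x^{\epsilon},y^{\delta})$ with $x^{+}=x$, $x^{-}=x^{\circ}$, then the involution on hom-categories must be a functor $\cala(x^{\epsilon},y^{\delta})^{op}\to\cala(x^{-\epsilon},y^{-\delta})$, and the requirement that applying it twice be the \emph{identity} forces the composite $\cala(x,y)^{op}\to\cala(x^{\circ},y^{\circ})$ followed by $\cala(x^{\circ},y^{\circ})^{op}\to\cala(x^{\circ\circ},y^{\circ\circ})$ to be literally $\cala(x,y)^{op op}=\cala(x,y)$ --- but $x^{\circ\circ}$ is only equivalent to $x$ via $\et_{x}$, not equal to it, so the naive $\pm$-marking does not make double-opposite syntactically the identity. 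One must instead build the coherence data into the objects (e.g.\ take objects to be pairs of objects of $\cala$ equipped with a specified adjoint equivalence between one and the opposite of the other), and then verify the compatibility axiom for $\zeta$ and the $\theta$-axiom for the resulting duality pseudofunctor; since you explicitly delegate exactly this verification back to \cite{shulman}, the proposal is an outline that circles back to the cited proof rather than a proof.
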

The theorem above is essentially a coherence theorem for bicategories with duality involutions, which ensures that there is no loss in generality in considering only strict duality involutions. In \cite{shulman}, the theorem is proven by using the theory of 2-monads and representable multicategories.

In the following, which constitutes the main result of the present work, we provide a concrete illustration of the above theorem involving naturally occurring bicategories with duality involutions, namely $\Alg^{fd}$ and $\KV$ considered in Section \ref{section:finite}.
\begin{rmk} Morita bicategories of algebras have a natural generalisation to the case of $(\infty,1)$-categories \cite{haugseng}. Morever, the constructions can be reiterated to higher algebraic structures, such as $E_{n}$-algebras \cite{haugseng, gwillscheim}. It would therefore be interesting to properly develop a theory of duality involutions adapted to the $\infty$-world. 
\end{rmk}
\section{A duality involution on $\mathrm{Alg}^{fd}_{2}$}\label{section:alg}
In this section we explicitely construct a weak duality involution on the Morita bicategory $\Alg^{fd}$. In the next section we will then prove that such a weak duality involution \emph{strictifies} to the duality involution on $\KV$ described in Section \ref{section:duality}.

Consider the bifunctor
\begin{equation}
(-)^{\circ}:(\Alg^{fd})^{co}\to\Alg^{fd}
\end{equation}
defined as follows:
\begin{itemize}
\item to an object, i.e an algebra $A$ it assigns $A^\circ := A^{op}$, the opposite algebra;
\item to a 1-morphism, i.e. a bimodule $_{A}M_{B}$ it assigns $({}_{A}M_B)^\circ := {}_{A^{op}}(\homm_B(M,B))_{B^{op}}$\footnote{Here we are taking the $(B,A)$-bimodule $\homm_B(M,B)$ and viewing it as an as $(A^{op},B^{op})$-bimodule. As mentioned above, we will henceforth perform this operation tacitly without further remark.}; and
\item to a 2-morphism, i.e. an intertwiner $f$ it assigns $f^\circ := f^*$.
\end{itemize}
In the above definition, $f^*$ denotes the adjoint map, namely it is given by the operation ``precompose with $f$''.

For $(-)^\circ$ to be a bifunctor, we need to specify invertible $2$-morphisms in $\Alg^{fd}$
\begin{equation}\label{eq:coe1}
({}_{A}M \otimes_B N_C)^\circ \Rightarrow ({}_{A}M_B)^\circ \otimes_{B^{op}} ({}_{B}N_C)^\circ
\end{equation}
and
\begin{equation}\label{eq:coe2}
1_A^\circ \Rightarrow 1_{A^\circ} .
\end{equation}
satisfying compatibility diagrams.

First, notice that we have the following isomorphisms of $(C,A)$-bimodules
\begin{align}
\begin{split}
\homm_{C}(M\otimes_{B}N,C)&\simeq \homm_{B}(M,\homm_{C}(N,C))\\
& \simeq \hom_{C}(N,C)\otimes_{B}\hom_{B}(M,B)
\end{split}
\end{align}
which is natural in $M$ and $N$; see Appendix \ref{appendix:modules} for details. Notice now that for arbitrary bimodules $_{A}M_{B},\: _{B}N_{C}$ and $_{A}S_{C}$, any morphism $_{A}M\otimes_{B}N_{C} \to {}_{A}S_{C}$ can be regarded as a morphism $_{C^{op}}N\otimes_{B^{op}}M_{A^{op}} \to {}_{C^{op}}S_{A^{op}}$. We then get the isomorphism in (\ref{eq:coe1}).

Consider now the natural isomorphism of \emph{algebras}
\begin{equation}
\homm_A(A,A)\xrightarrow{\simeq}A^{op}
\end{equation}
given by
\begin{equation}
f \longmapsto f(1) .
\end{equation}
It is straightforward to check that the above isomorphism is an isomorphism of $(A,A)$-bimodules, where we canonically regard $A^{op}$ equipped with the $(A^{op})^{op}=A$ left and right actions. By regarding them both as $(A^{op},A^{op})$-bimodules we obtain the isomorphism (\ref{eq:coe2}).

Notice that the required naturality with respect to 2-morphisms of the isomorphism $(\ref{eq:coe1})$ and $(\ref{eq:coe2})$ is guaranteed by the naturality of the various isomorphisms of bimodules involved.
\begin{rmk}
The isomorphisms above, in particular (\ref{eq:coe1}), are available because the objects of $\Alg^{fd}$ are finite-dimensional \emph{semisimple} algebras, and hence all bimodules are projective.
\end{rmk}
We now proceed to construct the pseudonatural adjoint equivalence $\et$ and the modification $\zeta$ required in definition \ref{def:duality}.

Following \cite{gurski}, for $\et$ it is enough to give the data of a pseudonatural transformation of bifunctors
\begin{equation}
1_{\mathrm{Alg}_{2}} \Rightarrow (-)^\circ \circ ((-)^{\circ})^{co},
\end{equation} 
whose associated 1- and 2-morphisms are invertible, each in the appropriate sense. More precisely, we need a family of invertible 1-morphisms $\et_A$, and a family of invertible 2-morphisms $\et_{M}$, such that $\et_A : A \rightarrow ((A)^{\circ})^\circ = A$, and such that the $\et_{M}$ witnesses the ``commutativity'' of the squares
\begin{equation}\label{nat square}
\begin{tikzcd}
A \arrow[d, "M"'] \arrow[rr,"\et_A"] & &  A \arrow[d, "M^{\circ \circ}"]	\\
B \arrow[rr,"\et_B"']	\arrow[shorten <=20pt,shorten >=20pt,urr,"\et_{M}", Rightarrow] & & B.
\end{tikzcd}
\end{equation}
For the 1-morphisms $\et_A$ we choose the identity bimodules ${}_{A}A_A$, which are clearly invertible. For the 2-morphisms we define $\et_{M}$ to be the isomorphism of bimodules given by 
\begin{equation}\label{eq:commcell}
{}_{A}(M \otimes_{B} B)_B \simeq {}_{A}M_B \simeq {}_{A}(\homm_{A}(\homm_B(M,B), A))_B \simeq {}_{A}(A\otimes_{A}\homm_{A}(\homm _B(M,B), A))_B,
\end{equation}
where the middle isomorphism is the canonical isomorphism ${}_{A}M_B  \rightarrow {}_{A}(M^{\circ \circ})_B$; see Appendix \ref{appendix:modules} for details.

For the invertible modification $\zeta$ we need to specify invertible 2-cells
\begin{equation}
\zeta_A : \et_{A^\circ} \Rightarrow (\et_A)^{\circ}.
\end{equation}
For fixed $A$, we choose as $\zeta_A$ the inverse of the isomorphism
\begin{equation}
(\et_A)^{\circ} = {}_{A^{op}}(\homm_A(A,A))_{A^{op}}  \rightarrow  {}_{A^{op}}{A^{op}}_{A^{op}} = \et_{A^\circ} 
\end{equation}
which already appeared as part of the coherence data for the bifunctor $(-)^{\circ}$, namely in (\ref{eq:coe1}). 

We can now state the following
\begin{thm}\label{circ is duality inv}
The bifunctor $(-)^{\circ}$ together with $\et$ and $\zeta$ defines a weak duality involution on $\mathrm{Alg}^{fd}_{2}$.
\end{thm}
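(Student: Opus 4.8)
The plan is to verify that the triple $\bigl((-)^{\circ}, \et, \zeta\bigr)$ satisfies every clause of Definition \ref{def:duality}. The construction has already produced all the data: the pseudofunctor $(-)^{\circ}$ with its coherence 2-cells (\ref{eq:coe1}) and (\ref{eq:coe2}), the pseudonatural transformation $\et$ with 1-components ${}_A A_A$ and 2-components (\ref{eq:commcell}), and the modification $\zeta$ with components the inverses of the coherence cells. What remains is to confirm that (i) $(-)^{\circ}$ is genuinely a pseudofunctor, i.e.\ (\ref{eq:coe1}) and (\ref{eq:coe2}) satisfy the associativity and unitality hexagon/triangle axioms; (ii) $\et$ is genuinely pseudonatural, i.e.\ the cells $\et_M$ are natural in 2-morphisms and respect horizontal composition and units of 1-morphisms, and moreover that $\et$ is an \emph{adjoint} equivalence (which, by Theorem \ref{thm:adjointequiv}, follows automatically once we know it is an equivalence, and each $\et_A = {}_A A_A$ is obviously an equivalence with each $\et_M$ invertible); (iii) $\zeta$ is a well-defined invertible modification, i.e.\ its components are compatible with $\et_M$; and (iv) the single remaining compatibility diagram relating $\zeta$, $\et$, and the coherence cells of $(-)^{\circ}$ (the one cited from \cite{shulman}) commutes.

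First I would dispatch (i). The coherence cell (\ref{eq:coe1}) is built from three canonical isomorphisms: the tensor–hom adjunction $\homm_C(M\otimes_B N, C)\simeq \homm_B(M,\homm_C(N,C))$, the isomorphism $\homm_B(M,\homm_C(N,C))\simeq \homm_C(N,C)\otimes_B \homm_B(M,B)$ (valid because all modules over semisimple algebras are projective, as flagged in the remark), and the op-flip identifying a $(C,A)$-bimodule map with a $(C^{op},A^{op})$-bimodule map. Each of these is natural and satisfies the usual pentagon-type coherences for the tensor product; the associativity axiom for $(-)^{\circ}$ then reduces to a diagram chase that factors through the naturality and coherence of these three families. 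Likewise the unit cell (\ref{eq:coe2}), coming from $\homm_A(A,A)\xrightarrow{\simeq}A^{op}$, $f\mapsto f(1)$, matches the unitors of $\Alg^{fd}$ by direct inspection. I would present these as ``follows from the coherence of the tensor–hom adjunction and the unitors of $\Alg^{fd}$'', since the verifications are routine but lengthy — indeed the paper explicitly defers them to Appendix \ref{appendix:proof}.

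For (ii) and (iii), pseudonaturality of $\et$ amounts to checking that (\ref{eq:commcell}), which is literally a composite of unitors and the double-dual isomorphism ${}_A M_B \xrightarrow{\simeq}{}_A(M^{\circ\circ})_B$, is natural in intertwiners $f\colon M\to N$ (immediate, since all three isomorphisms in (\ref{eq:commcell}) are natural in $M$) and compatible with composition of bimodules and with unit bimodules (where one uses that the double-dual isomorphism is monoidal and sends $1_A\mapsto 1_{A}^{\circ\circ}$ compatibly with (\ref{eq:coe1}), (\ref{eq:coe2})). That $\et$ underlies an adjoint equivalence is then free by Theorem \ref{thm:adjointequiv}. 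Well-definedness of $\zeta$ as a modification is the statement that the square expressing naturality of $\zeta$ — built from $\et_{A^\circ}$, $(\et_A)^\circ$, and a 1-morphism's image under both composites — commutes; since $\zeta_A$ is defined as the inverse of a piece of the coherence data of $(-)^{\circ}$, this again reduces to the coherence axioms already being checked in (i).

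The main obstacle is (iv): the final compatibility diagram from Shulman's definition relating $\zeta$, $\et$, and the pseudofunctor-coherence of $(-)^{\circ}$. Unlike the others, this one genuinely intertwines all three pieces of data and does not reduce to a single naturality square; verifying it requires unwinding the precise form of (\ref{eq:coe1}) on the unit bimodule ${}_A A_A$, tracking how $\et_{A}$ interacts with $(\et_{A^\circ})$ and $((\et_A)^\circ)^{co}$, and checking that the two ways of going around agree. My expectation is that it commutes essentially because $\et_A$ and $\et_{A^\circ}$ were chosen to be \emph{identity} bimodules and $\zeta_A$ was chosen to be the \emph{same} cell already appearing in the pseudofunctor structure — so the diagram collapses to an instance of the associativity/unitality coherence of $(-)^{\circ}$ that was verified in step (i) — but pinning this down is the real work, and it is exactly what is carried out in Appendix \ref{appendix:proof}. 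I would therefore state the theorem as following from the construction together with the coherence verifications deferred to that appendix.
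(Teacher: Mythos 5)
Your outline has the right architecture and correctly isolates the crux: items (i)--(iii) are routine naturality/coherence checks, and the real content is the single compatibility diagram from Shulman's definition relating $\zeta$, $\et$, and the coherence cells of $(-)^{\circ}$. That is indeed exactly what the paper's proof in Appendix \ref{appendix:proof} consists of --- it takes (i)--(iii) as read and verifies only that diagram. The gap is that you never actually verify it. You state an expectation that the diagram ``collapses to an instance of the associativity/unitality coherence of $(-)^{\circ}$'' and then defer to the appendix, which is circular in a blind attempt; moreover the heuristic is not quite how the verification goes. The diagram does not reduce to the pseudofunctor axioms of $(-)^{\circ}$: it is a standalone computation about how $\zeta_A$ interacts with its own image $(\zeta_A)^{\circ}=(\zeta_A)^{*}$ under the involution and with the double-dualization isomorphism hidden inside $\et_A$.

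Concretely, what must be checked (and what the paper checks) is that for each $A$ the 2-cell $\id_A\otimes\zeta_{A^{op}}\colon A\otimes_A A\to A\otimes_A\homm_{A^{op}}(A^{op},A^{op})$, $a\otimes b\mapsto a\otimes\phi_b$ (where $\phi_b(x)=x\cdot b$), coincides with the composite $A\otimes_A A\xrightarrow{\et_A} A\otimes_A A^{\circ\circ}\xrightarrow{\id\otimes(\zeta_A)^{*}} A\otimes_A\homm_{A^{op}}(A^{op},A^{op})$, where $\et_A$ sends $a\otimes b$ to $1\otimes f_{ab}$ with $f_{ab}(g)=g(ab)$. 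The identity that makes this work is the element computation $(\zeta_A)^{*}(f_{ab})(x)=f_{ab}(\bar{\phi}_x)=\bar{\phi}_x(ab)=x\cdot a\cdot b=\phi_b(x\cdot a)=(a\cdot\phi_b)(x)$, i.e.\ $(\zeta_A)^{*}(f_{ab})=a\cdot\phi_b$, which uses the explicit formulas for $\zeta_A$ and $\zeta_{A^{op}}$ and the passage between the $A$- and $A^{op}$-actions on $\homm_{A^{op}}(A^{op},A^{op})$. Your proposal never produces this identity (or anything equivalent to it), so the one step that carries the content of the theorem is missing; everything else you list is correctly diagnosed as routine.
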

We have deferred the proof of the above theorem to Appendix \ref{appendix:modules} is duality involution.  
\begin{rmk} The weak duality involution $(-)^{\circ}$ on $\Alg^{fd}$ can be regarded as an instance of the procedure outlined in \cite[Ex. 2.10]{shulman}. Our concrete description is needed in order to prove the main theorem in Section \ref{section:rep}.  
\end{rmk}
\begin{rmk}
We find it interesting to notice that the data needed to make $(-)^{\circ}$ into a duality involution is \emph{entirely} produced from the coherence data needed to define $\Alg$ and the pseudofunctor $(-)^{\circ}$ itself. A similar remark applies to the duality involution on $\KV$, though the coherence data in this case is trivial.
\end{rmk}
\section{Rep as a duality pseudofunctor}\label{section:rep}
In this section we show that the bifunctor $\Rep:\Alg^{fd}\to\KV$ introduced in Section \ref{section:finite} can be canonically equipped with the structure of a duality pseudofunctor.

According to Definition \ref{def:dualityfunctor}, we need to provide a pseudonatural adjoint equivalence $\ii$ and a modification $\theta$ satisfying a compatibility diagram.

\noindent\emph{Definition of $\ii$}: we need to specify a pseudonatural equivalence of the following form
\begin{equation}
\begin{tikzcd}
(\Alg^{fd})^{co} \arrow[r, "\text{Rep}^{co}"] \arrow[d, "(-)^{\circ}"'] & (\KV)^{co} \arrow[d, "(-)^{op}"] \arrow[shorten <=10pt,shorten >=10pt, dl, "\ii", Rightarrow] \\
\Alg^{fd} \arrow[r, "\Rep"']  & \KV .
\end{tikzcd}
\end{equation}
This consists of a family of invertible 1-morphisms in $\KV$
\begin{equation}
\ii_A : \text{Rep}(A)^{op} \longrightarrow \text{Rep}(A^\circ),\quad\forall{A}\in\Alg^{fd},
\end{equation}
and a family of invertible 2-morphisms
\begin{equation}
\begin{tikzcd}
(\Rep A)^{op} \arrow[r, "\ii_A"] \arrow[d, "((-) \otimes_A M)^{op}"'] & \Rep(A^\circ) \arrow[d, "(-) \otimes_{A^\circ} M^{\circ}"] \arrow[shorten <=10pt,shorten >=10pt, dl, "\ii_M", Rightarrow] \\
(\Rep B)^{op} \arrow[r,  "\i_B"']  & \Rep(B^\circ)
\end{tikzcd}
\end{equation}
for every bimodule ${}_{A}M_B$, satisfying the usual pseudonaturality conditions.

Define $\ii_A$ to be the additive functor which
\begin{itemize}
\item to any right $A$-module $V_A$ assigns the right $A^{op}$-module $V^{\circ}_{A^{\circ}}:=\homm_A(V,A)_{A^{op}}$ 
\item to any morphism $f^{op}:V_A \to W_A$ assigns ${f^{*}}: V^{\circ}_{A^{\circ}}\to W^{\circ}_{A^{\circ}}$. 
\end{itemize}
Note that $\i_{A}$ is an exact functor, i.e. a 1-morphism in $\KV$.

For any bimodule ${}_{A}M_B$, let $\ii_M$ be the natural isomorphism whose component at $V \in (\Rep A)^{op}$ is given by the canonical isomorphism 
\begin{equation}\label{eq:coherencecirc}
(\ii_{M})_{V}:V^\circ \otimes_{A^{op}} M^\circ \xrightarrow{\simeq} (V \otimes_A M)^\circ 
\end{equation}
obtained by combining the various theorems\footnote{Recall that all the bimodules we are considering are automatically projective as left and right modules.} in Appendix \ref{appendix:modules}. We leave to the reader to check that $\ii_{M}$ is indeed a natural isomorphism.
\begin{lemma} The family $\ii:=\left\{\ii_{A}, \ii_{M}\right\}_{A,M\in\Alg^{fd}}$ gives rise to a pseudonatural transformation.
\end{lemma}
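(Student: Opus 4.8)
The plan is to verify the axioms of a pseudonatural transformation between the two pseudofunctors $(-)^{op}\circ\Rep^{co}$ and $\Rep\circ(-)^{\circ}$ from $(\Alg^{fd})^{co}$ to $\KV$, using the data $\ii_A$ and $\ii_M$ already specified. First I would record precisely what must be checked: (i) that each $\ii_A$ is a genuine $1$-morphism in $\KV$, i.e.\ an exact (equivalently, right exact, since everything in sight is semisimple) additive functor $\Mod_{A}^{op}\to\Mod_{A^{op}}$; (ii) that each $\ii_M$ is a well-defined invertible $2$-morphism, i.e.\ a natural isomorphism filling the naming square, with the correct source and target functors; (iii) the \emph{naturality of $\ii$ with respect to $2$-cells}, namely that for every intertwiner $f\colon{}_AM_B\to{}_AN_B$ the two ways of pasting $\ii_M$, $\ii_N$ with $\Rep^{co}(f)=((-)\otimes_A f)^{op}$ and $\Rep((-)^\circ)(f)=(-)\otimes_{A^\circ}f^\circ$ agree; and (iv) the two coherence conditions of a pseudonatural transformation, one for composable $1$-cells (bimodules ${}_AM_B$, ${}_BN_C$) expressing compatibility of $\ii_{N\circ M}$ with $\ii_M$, $\ii_N$ and the compositors of the two pseudofunctors, and one for the identity $1$-cell ${}_AA_A$ expressing compatibility of $\ii_{1_A}$ with the unitors.

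For (i), I would note that $\ii_A=\homm_A(-,A)$ is contravariant on $\Mod_A$, hence covariant on $\Mod_A^{op}$, and is additive; exactness on a semisimple category is automatic, and the fact that $\homm_A(V,A)$ is again finite-dimensional (so lands in a KV-vector space) follows from finite-dimensionality of $V$ and $A$. For (ii), the content is just that the source of $\ii_M$ is $\ii_B\circ\Rep^{co}((-)\otimes_A M)=\homm_B((-)\otimes_A M,B)$ and its target is $\Rep((-)^\circ\text{ of }M)\circ\ii_A=\homm_A(-,A)\otimes_{A^{op}}\homm_B(M,B)$, and the canonical iso \eqref{eq:coherencecirc}, assembled from the hom-tensor and dualisation isomorphisms recalled in Appendix \ref{appendix:modules}, is exactly a natural transformation between these; invertibility is part of those appendix statements (valid because all modules are projective, as flagged in the footnote). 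For (iii), I would appeal to the stated naturality of each of the constituent isomorphisms in $M$ (and in $V$): the square commutes because each building block---the adjunction iso $\homm_B(V\otimes_A M,B)\simeq\homm_A(V,\homm_B(M,B))$ and the identification $\homm_A(V,\homm_B(M,B))\simeq\homm_A(V,A)\otimes_{A^{op}}\homm_B(M,B)$---is natural with respect to intertwiners $f$, so the diagram chases through.

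Step (iv) is where I expect the real work, and it is the main obstacle. The first coherence diagram unfolds to an equality of two natural isomorphisms $\homm_C(V\otimes_A(M\otimes_B N),C)\;\simeq\;(\homm_A(V,A)\otimes_{A^{op}}\homm_B(M,B))\otimes_{B^{op}}\homm_C(N,C)$, one obtained by applying $\ii_{N\circ M}$ and then reassociating via the compositor of $\Rep\circ(-)^\circ$ (which involves the coherence $2$-cell \eqref{eq:coe1} of $(-)^\circ$), the other by applying $\ii_M$ then $\ii_N$ interleaved with the compositor of $(-)^{op}\circ\Rep^{co}$ (i.e.\ the associator of tensor products). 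I would prove their equality by reducing everything, via the Yoneda-style naturality already established, to the single canonical associativity/hom-tensor identity for the iterated tensor product $M\otimes_B N$ together with the pentagon/hexagon coherence of these canonical isomorphisms of bimodules---precisely the same coherences that make $(-)^\circ$ a bifunctor and make $\Alg$ a bicategory, so no genuinely new identity is needed, only careful bookkeeping. The identity-$1$-cell case is analogous but easier: it collapses to the compatibility of $\ii_{1_A}$ with the iso $\homm_A(A,A)\simeq A^{op}$ of \eqref{eq:coe2} and the unitors, which is a direct check on the element $1\in A$. Throughout I would lean on the \emph{Remark} in Section~\ref{section:alg} that all this coherence data is produced canonically from the structure already in hand, so that each diagram commutes ``for the same reason'' the corresponding diagram for $(-)^\circ$ and for $\Alg$ commutes; the only care required is to keep track of the op-reversals of variable order (e.g.\ $M\otimes_B N$ versus $N\otimes_B M$ viewed over the opposite algebras) introduced by passing to $(\Alg^{fd})^{co}$.
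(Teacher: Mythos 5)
Your proposal is correct and follows essentially the route the paper intends: the paper in fact gives no proof of this lemma at all (the verification is left to the reader, resting on the naturality and canonicity of the hom--tensor and double-dual isomorphisms of Appendix~\ref{appendix:modules}, exactly the ingredients you invoke), and your checklist (i)--(iv) is a faithful expansion of that implicit argument, including the correct identification of the composition-coherence square as an instance of the same isomorphism $\homm_{C}(M\otimes_{B}N,C)\simeq\homm_{C}(N,C)\otimes_{B}\homm_{B}(M,B)$ that furnishes the compositor of $(-)^{\circ}$. The only cosmetic discrepancy is the orientation of $\ii_{M}$, which the paper takes as a map $V^{\circ}\otimes_{A^{op}}M^{\circ}\to(V\otimes_{A}M)^{\circ}$ (opposite to your source/target convention); since it is invertible this is immaterial.
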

To make $\ii$ into a pseudonatural adjoint equivalence, we show that $\ii$ is an equivalence, and invoke Theorem \label{thm:adjointequiv}, and the subsequent remark.

We define a (pseudo) inverse $\ii^\square$, whose component of 1-morphisms are 
\begin{equation}
\ii^\square_A := \ii_{A^\circ}^{op},
\end{equation}
and whose component 2-morphisms are
\begin{equation}
\ii^\square_M := \ii_{M^\circ}^{op}.
\end{equation}
To make $\ii$ and $\ii^\square$ into an equivalence pair, we consider as unit the invertible modification $\epsilon$ whose component at $A$ is the natural isomorphism 
\begin{equation}
\epsilon_{A}:1_{(\text{Rep}A)^{op}} \Rightarrow \ii^\square_A \circ \ii_A ,
\end{equation}
the component of which at $V \in (\Rep A)^{op}$ is the canonical isomorphism
\begin{equation}\label{eq: adj}
(\epsilon_{A})_{V}:V \longrightarrow (\ii^\square_A \circ \ii_A)(V)= \homm_{A^{\circ}}(\homm_A(V,A),{A^{\circ}}). 
\end{equation}
provided by Theorem \ref{thm:adj} in Appendix \ref{appendix:modules}. 

By Theorem \ref{thm:adjointequiv}, we can consider the unique adjoint equivalence in $\KV((\Rep^{co})^{\circ}),\Rep\circ(-)^{\circ})$ associated to $\ii$, $\ii^{\Box}$ and $\epsilon$.

\noindent\emph{Definition of $\theta$}: Now  we construct a modification $\theta$ whose components are invertible 2-morphisms in $\KV$ of the following form
\begin{equation}
\begin{tikzcd}
\Rep A \arrow[r, "(\ii_A)^{op}", {name = U}] & \Rep(A^\circ)^{op} \arrow[d, "\ii_{A^{op}}"]  \\
\Rep A\arrow[u, "\id_{\Rep A}"] \arrow[r,  "(-)\otimes_{A}A"', {name = D}]  & \Rep A \arrow[shorten <=10pt,shorten >=10pt, from = U, to = D, "\theta_{A}", Rightarrow]
\end{tikzcd}
,\quad \forall {A}\in\Alg^{fd}.
\end{equation}
Namely, $\theta_{A}$ is a natural isomorphism between $\ii_{A^{op}}\circ(\ii_{A})^{op}$ and $(-)\otimes_{A}A$. We choose its component at $V\in\Rep A$ to be the isomorphism
\begin{equation}
(\theta_{A})_{V}:V^{\circ\circ}\xrightarrow{\simeq} V\otimes_{A}A
\end{equation}
obtained as the following composition of canonical isomorphisms
\begin{equation}
V^{\circ\circ}\xrightarrow{\simeq}V\xrightarrow{\simeq}V\otimes_{A}A,
\end{equation}
where the first one is the inverse of the isomorphism in (\ref{eq: adj}).

We following is easily checked.
\begin{lemma}
The family $\theta:=\left\{\theta_{A}\right\}_{A\in\Alg^{fd}}$ defines a modification.
\end{lemma}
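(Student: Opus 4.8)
The final statement to prove is the \textbf{Lemma} asserting that the family $\theta := \{\theta_A\}_{A \in \Alg^{fd}}$ defines a modification. The plan is to unwind the definition of a modification between the two pseudonatural transformations in question, and verify the single coherence axiom it must satisfy.

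First I would recall precisely what is being compared. The modification $\theta$ is supposed to connect the pseudonatural transformations $\ii_{(-)^{\circ}}\circ (\ii_{(-)})^{op}$ (going along the top-right of the square in Definition \ref{def:dualityfunctor}, built from the already-constructed adjoint equivalence $\ii$ and its components at opposite algebras) and $\Rep((-)^{\circ\circ}) = (-)\otimes_A A$ composed appropriately with $\et_{\Rep A} = \id_{\Rep A}$ and $\Rep(\et_A)$. Since in $\KV$ the duality involution $(-)^{op}$ is \emph{strict} and $\et$ on $\KV$ is the identity, several of the structure cells collapse, which is what makes this ``easily checked''. So the first step is to write out, for each $A$, the component $(\theta_A)_V : V^{\circ\circ} \xrightarrow{\simeq} V \otimes_A A$ explicitly as the composite $V^{\circ\circ} \xrightarrow{\simeq} V \xrightarrow{\simeq} V\otimes_A A$, where the first map is the inverse of $(\epsilon_A)_V$ from \eqref{eq: adj} and the second is the right unitor of $\Rep A$.

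Next I would check the two things required: (i) for each fixed $A$, $\theta_A$ is a natural isomorphism between the two functors $\Rep A \to \Rep A$ (this is immediate, since both $(\epsilon_A)_V$ and the unitor are natural in $V$ by Theorem \ref{thm:adj} and by the coherence of $\Alg^{fd}$ respectively, hence so is their composite), and (ii) the family $\{\theta_A\}$ is compatible with the pseudonaturality 2-cells of the two transformations over every 1-morphism $M: A \to B$ in $\Alg^{fd}$, i.e. the modification axiom: the appropriate pasting of $\theta_B$ with the structure cell $\ii_{M}$ (and $\ii_{M^\circ}$) on one side equals the pasting of $\theta_A$ with $\Rep(\et_M)$ on the other, all happening in $\KV(\Rep A, \Rep B)$. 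Concretely this unwinds to a diagram of canonical isomorphisms among iterated hom- and tensor-modules, $(V\otimes_A M)^{\circ\circ}$, $V^{\circ\circ}\otimes_A M$, $(V\otimes_A M)\otimes_B B$, etc., which commutes because every arrow in it is one of the canonical isomorphisms from Appendix \ref{appendix:modules}, and those are natural and mutually coherent.

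The main obstacle — really the only nontrivial point — is bookkeeping: one must correctly identify which structure 2-cells of the pseudonatural transformations $\ii_{(-)^{\circ}}\circ(\ii_{(-)})^{op}$ and $\Rep(\et_{(-)})$ appear in the modification axiom, keeping track of the $(-)^{op}$'s and the reversal of direction they induce on natural transformations, and of the fact that $\et_{M}$ from \eqref{eq:commcell} is precisely the double-dual identification. Once the diagram is drawn, commutativity follows from the naturality and coherence (associativity/unit compatibility) of the canonical bimodule isomorphisms $\homm_B(M\otimes_A N, B)\simeq \homm_A(M,\homm_B(N,B))$ and $\homm_A(A,A)\simeq A^{op}$ collected in Appendix \ref{appendix:modules} — there is no genuinely new input needed beyond those. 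I would therefore present the proof as: exhibit $\theta_A$, note naturality in $V$, then observe that the modification square, evaluated at each object $V$, is a composite of Appendix-\ref{appendix:modules} canonical isomorphisms and hence commutes by their coherence, concluding that $\theta$ is a modification.
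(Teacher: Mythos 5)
Your proposal is correct and takes essentially the route the paper intends: the paper leaves this lemma as ``easily checked,'' and your plan --- exhibit $(\theta_A)_V$ as the composite of $\psi_V^{-1}$ with the unitor, note naturality in $V$, and reduce the modification axiom over each bimodule ${}_AM_B$ to the naturality and mutual coherence of the canonical isomorphisms of Appendix \ref{appendix:modules}, using that $(-)^{op}$ on $\KV$ is strict so the structure cells collapse --- is exactly the elided routine verification. One minor citation point worth fixing in a write-up: the isomorphism $(\epsilon_A)_V$ is really $\psi_V$ from Theorem \ref{thm:adj2} (whose naturality in $P$ is what you need), not Theorem \ref{thm:adj}.
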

We can now prove our main theorem
\begin{thm}\label{thm:rep}The bifunctor $\Rep:\Alg^{fd}\to\KV$ equipped with the pseudonatural adjoint equivalence $\ii$ and the modification $\theta$ is a duality pseudofunctor.
\end{thm}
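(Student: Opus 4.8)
The plan is to verify that the data $(\Rep, \ii, \theta)$ satisfies the single compatibility axiom for a duality pseudofunctor from Definition \ref{def:dualityfunctor}, since all the individual pieces — $\ii$ is a pseudonatural adjoint equivalence, $\theta$ is an invertible modification — have already been established in the preceding lemmas. So the entire content of the proof is the coherence check: unwinding Shulman's compatibility diagram (relating $\zeta$, $\et$, and $\ii$) and confirming it commutes for our specific choices.

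First I would recall the explicit form of the axiom from \cite{shulman}. It is a pasting equation in $\KV$ between two $2$-morphisms built from $\theta$, $\ii$, the duality involution data $\zeta^{\Alg}$ and $\zeta^{\KV}$ on source and target, and the pseudonaturality $2$-cells of $\ii$; both sides are natural transformations between suitable composites of exact functors, so it suffices to check equality componentwise at each object $V \in \Rep A$ (indeed at each $v \in V$), reducing the $2$-categorical identity to an identity of $\fieldk$-linear maps. The key observation — already flagged in the final remark of Section \ref{section:alg} — is that \emph{every} structure morphism in sight ($\zeta^{\Alg}_A$, the components of $\et$, the coherence cells (\ref{eq:coe1})--(\ref{eq:coe2}) of $(-)^\circ$, the cells $\ii_M$ via (\ref{eq:coherencecirc}), and $\theta_A$ via (\ref{eq: adj})) is one of the \emph{canonical} isomorphisms of bimodules/modules assembled from associativity of tensor product, the unit isomorphisms $M \otimes_A A \simeq M$, and the double-dual isomorphism $V \xrightarrow{\simeq} V^{\circ\circ}$ supplied by Theorem \ref{thm:adj} (valid because all modules over a semisimple algebra are projective). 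Since $\zeta^{\KV}$ for the strict involution $(-)^{op}$ is the identity modification, the target side of the diagram simplifies considerably, and what remains is to match, up to the canonical identifications, the two routes around the compatibility hexagon.

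Concretely, the steps I would carry out in order: (1) state the compatibility diagram of Definition \ref{def:dualityfunctor} explicitly with all arrows instantiated for $F = \Rep$, noting that $\zeta^{\KV} = \id$; (2) evaluate every arrow at a test object $V \in \Rep A$, writing each as the relevant canonical module isomorphism from Appendix \ref{appendix:modules}; (3) observe that both composites around the diagram are, after these substitutions, equal to the \emph{same} canonical isomorphism $V^{\circ\circ} \simeq V \simeq V \otimes_A A$ (resp. its evident variant with an extra $\otimes_A A$), because canonical isomorphisms of this type compose to canonical isomorphisms and are unique as such — this is precisely the coherence packaged into the associativity and unit axioms of $\Alg$ together with the functoriality of $\homm_A(-,A)$; (4) conclude that the diagram commutes, hence $(\Rep, \ii, \theta)$ is a duality pseudofunctor, and remark that since $\Rep$ is a biequivalence and $\KV$ carries a strict duality involution, this exhibits the required strictification in the sense of Shulman's theorem.

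The main obstacle I anticipate is purely bookkeeping: Shulman's compatibility axiom is a large pasting diagram, and instantiating it correctly — getting every $(-)^{op}$, every $(-)^{\circ}$, and the directions of all the pseudonaturality $2$-cells right, especially keeping track of the $\cala^{co}$ versus $\cala$ conventions and the contravariance of $\homm_A(-,A)$ — is error-prone. The actual mathematical input (that canonical isomorphisms of modules compose canonically) is routine once the diagram is pinned down, so the work lies in a careful, patient transcription rather than in any conceptual difficulty; for this reason the detailed verification is best relegated to the appendix, parallel to the treatment of Theorem \ref{circ is duality inv}.
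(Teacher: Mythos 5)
Your overall strategy coincides with the paper's: both reduce Shulman's compatibility pasting to a componentwise identity of linear maps at each $V\in\Rep A$, exploit that the involution on $\KV$ is strict so the target-side $\zeta$ and $\et$ are identities, and then match the two resulting composites of module isomorphisms. The one place where your argument as written would not go through is step (3): there is no principle that ``canonical isomorphisms of this type are unique as such.'' The coherence of $\Alg$ (associators and unitors) together with the functoriality of $\homm_{A}(-,A)$ does take care of all the tensor--unit identifications, but the two sides of the pasting differ at the level of the \emph{triple} dual $V^{\circ\circ\circ}$: the left-hand route passes through $((\theta_{A}^{-1})_{V})^{\circ}$, i.e.\ the $(-)^{\circ}$-image of the double-dual isomorphism $\psi_{V}$ of Theorem \ref{thm:adj2}, while the right-hand route uses $\psi_{V^{\circ}}^{-1}$ directly. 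Their agreement is exactly the triangle identity $\psi_{V^{\circ}}=(\psi_{V}^{\circ})^{-1}$ for the evaluation pairing --- an elementary but genuine computation that is not an instance of monoidal coherence, since $V^{\circ}$ admits two a priori distinct ``canonical'' comparisons with $V^{\circ\circ\circ}$. This identity, together with unwinding the definition of the cell (\ref{eq:coherencecirc}), is precisely the content of the paper's proof. Once you make that lemma explicit, your outline closes up and is otherwise the same argument.
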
 
\begin{proof}
We need to check that $\ii$ and $\theta$ satisfy the commutativity diagram\footnote{Notice that the diagram in \cite{shulman} contains a small typo.} in \cite{shulman}. Namely, we need to show that $\forall A\in\Alg^{fd}$ the 2-morphism
\begin{equation}\label{diag:LHS}
\begin{tikzcd}[column sep=huge]
(\Rep A)^{op} \arrow[r, "\ii_{A}"] & \Rep A^{op}\arrow[d, "(\ii_{A^{op}})^{op}"]\arrow[shorten <=20pt,shorten >=20pt, dl, "(\theta_{A}^{-1})^{op}", Rightarrow]\\
(\Rep A)^{op}\arrow[d, "\ii_{A}"']\arrow[u, bend left = 30, "\id" name = U]\arrow[u, bend right = 30, "\id"' name=D] \arrow[r, "((-)\otimes_{A}A))^{op}"'] & (\Rep A)^{op}\arrow[d, "\ii_{A}"] \arrow[shorten <=3pt,shorten >=3pt, Rightarrow, from = U,  to = D, "\id"]\arrow[shorten <=22pt,shorten >=22pt, dl, "\simeq", Rightarrow]\\
(\Rep A)^{op} \arrow[r, "(-)\otimes_{A^{op}}\homm_{A}{(A,A)}"'] & \Rep A^{op}
\end{tikzcd}
\end{equation}
from $\ii_{A}\circ(\ii_{A^{op}})^{op}\circ\ii_{A}$ to $(-)\otimes_{A^{op}}\homm_{A}{(A,A)}\circ\ii_{A}$ must coincide with the 2-morphism
\begin{equation}\label{diag:RHS}
\begin{tikzcd}[column sep=huge, row sep = huge]
(\Rep A)^{op} \arrow[shorten <=34pt,shorten >=34pt, dr, "\simeq", Rightarrow]\arrow[r, "\ii_{A}"] & \Rep A^{op} \arrow[r, "(\ii_{A^{op}})^{op}"] & (\Rep A)^{op}\arrow[d, "\ii_{A}"]\arrow[shorten <=30pt,shorten >=30pt, dl, "\theta_{{A}^{op}}"', Rightarrow]\\
(\Rep A)^{op} \arrow[u, "\id"]\arrow[r, "\ii_{A}"'] & \Rep A^{op}\arrow[u, "\id"'] \arrow[r, bend left = 13, "(-)\otimes_{A^{op}}A^{op}" name = U]\arrow[r, bend right = 13, "(-)\otimes_{A^{op}}\homm_{A}{(A,A)}"' name = D] & \Rep A^{op}\arrow[shorten <=4pt,shorten >=4pt, Rightarrow, from = U, to = D, "(\zeta_{A})_{*}"]
\end{tikzcd},
\end{equation}
where $(\zeta_{A})_*$ denotes the natural transformation induced by $\zeta_{A}$.

To help the reader in the pasting procedure, one can regard the diagram (\ref{diag:LHS}) to be of the following globular form
\begin{equation}
\begin{tikzcd}[column sep = huge, row sep = large]
\bullet \arrow[rr, bend right = 60, "(-)\otimes_{A^{op}}\homm_{A}{(A,A)}\circ\:\ii_{A}"']\arrow[r, bend left = 50, "(\ii_{A^{op}})^{op}\circ\:\ii_{A}" {name = U}]\arrow[r, bend right = 50, "" {name = D}] \arrow[shorten <=10pt,shorten >=10pt, Rightarrow, from = U, to = D, "(\theta_{A}^{-1})^{op}_{*}"']& \bullet \arrow[r, bend left = 60, "\ii_{A}" {name = P}]\arrow[r, bend right = 60, "" {name = Q}]  \arrow[shorten <=10pt,shorten >=10pt, Rightarrow, from = P, to = Q, "\id"']& \bullet \\
& & 
\end{tikzcd}
\end{equation}
while the diagram (\ref{diag:RHS}) has the following form
\begin{equation}
\begin{tikzcd}[column sep = huge, row sep = large]
\bullet \arrow[r, bend left = 60, "\ii_{A}" {name = P}]\arrow[r, bend right = 60, "\ii_{A}"' {name = Q}]\arrow[shorten <=13pt,shorten >=13pt, Rightarrow, from = P, to = Q, "\id"] & \bullet \arrow[r, bend left = 60, "\ii_{A}\circ\:(\ii_{A^{op}})^{op}" {name = U}] \arrow[r, "" {name = D}]\arrow[shorten <=6pt,shorten >=-1.0pt, Rightarrow, from = U, to = D, "\theta_{A^{op}}"]\arrow[r, bend right = 60, "(-)\otimes_{A^{op}}\homm_{A}{(A,A)}"' {name = E}]\arrow[shorten <=6pt,shorten >=6pt, Rightarrow, from = D, to = 
E, "(\zeta_{A})_{*}"]& \bullet
\end{tikzcd}
\end{equation}
For $V\in{(\Rep A)^{op}}$, the pasting of the diagram (\ref{diag:LHS}) gives rise to the following isomorphism
\begin{equation}\label{eq:morLHS}
V^{\circ\circ\circ}\xrightarrow{((\theta^{-1}_{A})_{V})^{\circ}}(V\otimes_{A}A)^{\circ}\xrightarrow{\simeq}V^{\circ}\otimes_{A^{op}}A^{\circ}
\end{equation}
where the second isomorphism is provided by the inverse of (\ref{eq:coherencecirc}).

On the other hand, the pasting of the diagram in (\ref{diag:RHS}) gives rise to the following isomorphism
\begin{equation}\label{eq:morRHS}
V^{\circ\circ\circ}\xrightarrow{(\theta_{A^{op}})_{V^{\circ}}}V^{\circ}
\otimes_{A^{op}}A^{op}\xrightarrow{\id\otimes\zeta_{A}}V^{\circ}\otimes_{A^{op}}A^{\circ}
\end{equation}
To see that (\ref{eq:morLHS}) and (\ref{eq:morRHS}) are equal, notice that the following diagram commutes
\begin{equation}
\begin{tikzcd}
V^{\circ\circ\circ}\arrow[r,"{((\theta_{A}^{-1})_{V})}^{\circ}"] \arrow[dr, "\psi^{-1}_{V^{\circ}}"']& (V\otimes_{A}A)^{\circ} \arrow[d,"1_{V}^{\circ}"]\\
& V^{\circ}
\end{tikzcd},
\end{equation}
where $1_{V}$ denotes the canonical isomorphism $V\to V\otimes_{A}A$, and $\psi_V$ denotes the isomorphism in Appendix \ref{appendix:modules}, Theorem \ref{thm:adj2}. This is due to the fact that $\psi_{V^{\circ}}=(\psi^{\circ}_{V})^{-1}$, and that by definition $(\theta_{A})_{V}=1_{V}\circ(\psi_{V})^{-1}$.
Similarly, the following diagram commutes
\begin{equation}
\begin{tikzcd}
{(V\otimes_{A}A)}^{\circ} \arrow[d,"1_{V}^{\circ}"] \arrow[r, "\simeq"] & V^{\circ}\otimes_{A^{op}}A^{\circ}\\
V^{\circ} \arrow[r, "1_{V^{\circ}}"'] & V^{\circ} \otimes_{A^{op}}A^{op} . \arrow[u, "\id\otimes\zeta_{A}"']
\end{tikzcd}
\end{equation}
This follows from the definition of the isomorphism in (\ref{eq:coherencecirc}). If we combine the two diagrams we obtain the following commutative diagram
\begin{equation}
\begin{tikzcd}
V^{\circ\circ\circ}\arrow[r,"{((\theta_{A}^{-1})_{V})}^{\circ}"] \arrow[dr, "\psi^{-1}_{V^{\circ}}"']& (V\otimes_{A}A)^{\circ} \arrow[d,"1_{V}^{\circ}"] \arrow[r, "\simeq"] & V^{\circ}\otimes_{A^{op}}A^{\circ}\\
& V^{\circ} \arrow[r, "1_{V^{\circ}}"'] & V^{\circ} \otimes_{A^{op}}A^{op} \arrow[u, "\id\otimes\zeta_{A}"']
\end{tikzcd}
\end{equation}
The upper composition corresponds to the isomorphism (\ref{eq:morLHS}), while the lower composition corresponds to the isomorphism (\ref{eq:morRHS}), after we notice that $1_{V^{\circ}}\circ\psi^{-1}_{V^{\circ}}=(\theta_{A^{op}})_{V^{\circ}}$. \end{proof}
\section{The general setting}\label{section:general}
In this section we briefly describe a general setting for the results discussed in the previous sections. We provide compact definitions of known concepts, and leave the full details of the various statements to future developments.

\subsection{Algebras in finite tensor categories and their Morita bicategory}\label{moritacat}
In the following $\calc$ denotes a symmetric semisimple finite tensor\footnote{We follow the convention in \cite{tenscat}, and assume that the category is rigid.} category. In other words, $\calc$ is a symmetric \emph{fusion category}; we refer to \cite{tenscat} for details concerning finite tensor categories and symmetric monoidal structures. The following definition is standard.
\begin{definition}
An algebra $A$ in $\calc$ is an object equipped with a multiplication $m:A\otimes{A}\to{A}$ and a unit $u:1\to{A}$ satisfying the appropriate commutative diagrams.
\end{definition}
Though an algebra is technically a triple $(A,m,u)$, we refer to $A$ as an algebra. A morphism of algebras is a morphism in $\calc$ which is compatible with the multiplication map and the unit in an obvious manner. 

Since $\calc$ is symmetric monoidal, we can define the opposite algebra $A^{op}$.
\begin{definition}
Let $A$ be an algebra in $\calc$. The \emph{oppositite algebra} $A^{op}$ is given by equipping $A$ with the following multiplication 
\begin{equation}
A\otimes{A}\xrightarrow{\sigma_{A,A}}A\otimes{A}\xrightarrow{m}A
\end{equation}
where $\sigma_{A,A}$ denotes the braiding isomorphism of $A$. 
\end{definition}
We moreover have the notion of a right $A$-module.
\begin{definition}
For an algebra $A$ in $\calc$, a \emph{right $A$-module} is an object $M$ in $\calc$ equipped with a morphism
\begin{equation}
M\otimes A\xrightarrow{\rho}M
\end{equation}
called a \emph{right action of $A$}, which satisfies appropriate commutative diagrams.
\end{definition}
A left $A$-module is defined analogously. Similar to the rest of the paper, when we want to emphasize that an object $M$ in $\calc$ is a right (resp. left) $A$-module, we use the notation $M_{A}$ (resp. $_{A}M$).

For $A$ and $B$ algebras in $\calc$, an $(A,B)$-bimodule $M$ is an object in $\calc$ which is a left $A$-module and a right $B$-module, and such that the two actions are compatible. We use $_{A}M_{B}$ to denote $(A,B)$-bimodules.

The following lemma is standard as well.
\begin{lemma}Let $(M,\rho)$ be a right $A$-module. Then the morphism
\begin{equation}
A\otimes{M}\xrightarrow{\sigma_{A,M}}M\otimes{A}\xrightarrow{\rho}M
\end{equation}
equips $M$ with the structure of a left $A^{op}$-module.
\end{lemma}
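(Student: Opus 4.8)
The statement to be proven is the final standard lemma: that for a right $A$-module $(M,\rho)$ in a symmetric finite tensor category $\calc$, the morphism $\rho\circ\sigma_{A,M}: A\otimes M\to M$ equips $M$ with the structure of a left $A^{op}$-module. The plan is to verify the two axioms of a left module — compatibility with multiplication (associativity-type pentagon) and compatibility with the unit — by direct diagram chasing, using the hexagon axioms for the braiding $\sigma$, the naturality of $\sigma$, and the definition of $A^{op}$.

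First I would set up notation: write $\lambda := \rho\circ\sigma_{A,M}: A\otimes M\to M$ for the candidate left action, and recall that the multiplication of $A^{op}$ is $m^{op} := m\circ\sigma_{A,A}$. I would then need to show the square expressing $\lambda\circ(\id_A\otimes\lambda) = \lambda\circ(m^{op}\otimes\id_M)$ (modulo the associator, which I will suppress as $\calc$ may be taken strict without loss of generality). Unwinding both sides: the left side is $\rho\circ\sigma_{A,M}\circ(\id_A\otimes(\rho\circ\sigma_{A,M}))$, and the right side is $\rho\circ\sigma_{A,M}\circ(m\circ\sigma_{A,A})\otimes\id_M$. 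The key steps are (i) use naturality of $\sigma$ to slide $\sigma_{A,M}$ past $\id_A\otimes\rho$, turning $\sigma_{A,M}\circ(\id_A\otimes\rho)$ into $\rho'\circ\sigma_{A\otimes M,\, ?}$-type rearrangements — more precisely, to commute the braiding with the action morphism; (ii) use the right-module associativity axiom $\rho\circ(\rho\otimes\id_A) = \rho\circ(\id_M\otimes m)$; and (iii) use one of the hexagon identities to rewrite the composite of braidings $\sigma$'s involving the three objects $A, A, M$ in terms of $\sigma_{A\otimes A,M}$ and $\sigma_{A,A}$. Assembling these gives the required equality. For the unit axiom, I would check $\lambda\circ(u\otimes\id_M) = \rho\circ\sigma_{A,M}\circ(u\otimes\id_M) = \rho\circ(\id_M\otimes u)\circ(\text{unit isos}) = \id_M$, using naturality of $\sigma$ to move $u$ across the braiding (so that $\sigma_{A,M}\circ(u\otimes\id_M) = (\id_M\otimes u)\circ\sigma_{1,M}$) and then the right-module unit axiom together with $\sigma_{1,M}$ being the canonical unit isomorphism.

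The main obstacle — really the only subtlety — is bookkeeping: getting the hexagon axiom applied with the correct orientation so that the braiding of $A\otimes A$ past $M$ decomposes correctly, and tracking that the braiding $\sigma_{A,A}$ which appears is exactly the one defining $A^{op}$ rather than its inverse. In a symmetric category $\sigma_{A,A}^{-1} = \sigma_{A,A}$, so this point is automatic here, but if one wanted the statement in the merely braided setting one would need to be careful; since the hypothesis is that $\calc$ is symmetric, the argument is clean. I would therefore present this as a short verification: state $\lambda = \rho\circ\sigma_{A,M}$, check the two axioms via the two commuting diagrams above, invoking naturality of $\sigma$, the symmetry hexagon, and the right-module axioms, and remark that symmetry of $\calc$ is what makes $m\circ\sigma_{A,A}$ (as opposed to $m\circ\sigma_{A,A}^{-1}$) the relevant multiplication.
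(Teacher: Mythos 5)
Your verification is correct: the associativity axiom follows exactly as you outline, by sliding the outer $\sigma_{A,M}$ past $\id_A\otimes\lambda$ via naturality, applying the right-module axiom $\rho\circ(\rho\otimes\id_A)=\rho\circ(\id_M\otimes m)$, and then using the hexagon to reassemble $(\sigma_{A,M}\otimes\id_A)\circ(\id_A\otimes\sigma_{A,M})$ into $\sigma_{A\otimes A,M}$ so that $m\circ\sigma_{A,A}=m^{op}$ appears; the unit axiom is the naturality argument you describe. The paper gives no proof (it declares the lemma standard), and your argument is precisely the standard one, including the correct observation that symmetry of $\calc$ makes the $\sigma_{A,A}$ versus $\sigma_{A,A}^{-1}$ convention immaterial.
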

Similarly, any left $A$-module is canonically a right $A^{op}$-module.

A morphism between $A$-modules is naturally defined as a morphism in $\calc$ which is compatible with the action $\rho$. In particular, right (resp. left) $A$-modules form a $\fieldk$-linear category $\Mod_{A}$ (resp. $_{A}\Mod$). Moreover, both  $\Mod_{A}$ and $_{A}\Mod$ are $\fieldk$-linear abelian categories.
\begin{prop}\cite{tenscat}
Let $A$ be an algebra in a finite tensor category $\calc$. Then $\Mod_{A}$ is a finite category.
\end{prop}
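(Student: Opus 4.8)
The plan is to exhibit $\Mod_A$, the category of right $A$-modules internal to $\calc$, as a finite linear category by producing a projective generator with finite-dimensional endomorphism algebra, using only the abelian structure of $\calc$ together with exactness of the tensor product in each variable; in particular neither semisimplicity nor the symmetric structure of $\calc$ is needed, so the argument applies to an arbitrary finite tensor category as stated. The two functors I would work with are the forgetful functor $U:\Mod_A\to\calc$ and its left adjoint, the free-module functor $F:=(-)\otimes A:\calc\to\Mod_A$, with adjunction isomorphism $\Hom_{\Mod_A}(X\otimes A, M)\simeq\Hom_\calc(X, U M)$ natural in $X$ and $M$.

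First I would record the formal properties of this adjunction and build the generator. The functor $U$ is exact and faithful, since kernels and cokernels of module morphisms are computed in $\calc$ with the induced action; in particular $U$ reflects epimorphisms. Because $\otimes$ is exact in each variable in a tensor category (a consequence of rigidity), $F$ is exact; and being left adjoint to the exact functor $U$, it preserves projectives, as $\Hom_{\Mod_A}(FQ,-)\simeq\Hom_\calc(Q, U(-))$ is exact whenever $Q$ is projective in $\calc$. Now $\calc$ is itself a finite linear category, so it admits a projective generator $P$ (e.g. via $\calc\simeq\Mod_R$ for a finite-dimensional algebra $R$, taking $P=R$), and hence $F(P)=P\otimes A$ is projective in $\Mod_A$. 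To see it generates, note the counit $F U M\to M$ is the action morphism, which is split epi in $\calc$ (a splitting being $\id\otimes u$) and therefore epi in $\Mod_A$; composing a chosen epimorphism $P^{\oplus n}\twoheadrightarrow U M$ with $F$ yields $F(P)^{\oplus n}\twoheadrightarrow F U M\twoheadrightarrow M$. Thus every object of $\Mod_A$ is a quotient of a direct sum of copies of the single projective $F(P)$.

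With this in hand I would verify the four defining conditions directly. Finite-dimensionality of Hom-spaces is immediate, since $\Hom_{\Mod_A}(M,N)$ is the subspace of action-compatible maps inside the finite-dimensional space $\Hom_\calc(UM,UN)$. Finite length follows because $U$ is faithful and exact, so any strictly increasing chain of subobjects of $M$ maps injectively to one of $UM$, bounding the length of $M$ by that of $UM$, which is finite in $\calc$. Enough projectives is precisely the generation statement above. Finally $F(P)$ has finite length (applying the length bound to it), hence finitely many composition factors; since every simple $A$-module, being simple, is a nonzero quotient of the generator $F(P)$, it occurs among these factors, giving finitely many isomorphism classes of simple objects.

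The main obstacle, indeed the only non-formal input, is the exactness of $F$, that is, the exactness of $\otimes$ in each variable, together with the existence of a projective generator in $\calc$; both are properties of an arbitrary finite tensor category (rigidity yields the former, finiteness of $\calc$ the latter) and require neither semisimplicity nor symmetry. Equivalently, one may package the conclusion as a Morita statement: since $B:=\End_{\Mod_A}(F(P))^{op}\simeq\Hom_\calc(P, P\otimes A)^{op}$ is finite-dimensional and $F(P)$ is a projective generator, the functor $\Hom_{\Mod_A}(F(P),-)$ induces an equivalence $\Mod_A\simeq\Mod_B$, and finiteness then follows from the earlier characterization of finite linear categories as those equivalent to $\Mod_B$ for $B$ finite-dimensional.
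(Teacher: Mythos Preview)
Your argument is correct and is essentially the standard one found in \cite{tenscat}. Note, however, that the paper does not actually supply a proof of this proposition: it is stated with attribution to \cite{tenscat} and used as a black box. There is therefore nothing to compare against in the paper itself; your write-up simply fills in what the reference provides.

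One minor redundancy: you observe that $F=(-)\otimes A$ is exact by rigidity, but for the generation step you only need that $F$, as a left adjoint, is right exact and hence preserves epimorphisms. The exactness of $U$ is what ensures $F$ preserves projectives, and that is the crucial point. Everything else (finite Hom-spaces, finite length via the faithful exact forgetful functor, finitely many simples as composition factors of the generator) is assembled correctly.
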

The notion of tensor product of $A$-modules can be expressed in general terms.
\begin{definition}
Let $(M_{A},\rho_{M})$ and $(_{A}N, \rho_{N})$ be $A$-modules. The tensor product $M\otimes_{A}N$ is defined as the following coequalizer diagram
\begin{equation}
\begin{tikzcd}
M\otimes{A}\otimes{N}\arrow[r, shift left, "\rho_{M}\otimes\id_{N}"]\arrow[r, shift right, "\id_{M}\otimes\rho_{N}"'] & M\otimes{N}\arrow[r] & M\otimes_{A}N
\end{tikzcd}
\end{equation}
\end{definition}
Since $\calc$ is abelian, the coequalizer above is given by the cokernel of the morphism $\rho_{M}\otimes\id_{N} - \id_{M}\otimes\rho_{N}$. Hence tensor products of modules always exist.

One can show that for bimodules $_{A}M_{B}$ and $_{B}N_{C}$, the tensor product $M\otimes_{B}N$ carries canonically the structure of an $(A,C)$-bimodule, and that the usual canonical isomorphisms are guaranteed. Namely, we have that $(M{\otimes}_{B}N)\otimes_{C}P\simeq M{\otimes}_{B}(N\otimes_{C}P)$, and $A\otimes_{A}{M}\simeq{M}\simeq{M{\otimes_{B}}B}$. See \cite{tenscat} for details.

It is natural then to consider the following\footnote{Beware of the different notation as in \cite{lurieTQFT}!}
\begin{definition}
The Morita bicategory $\Alg(\calc)$ of algebras in $\calc$ is the bicategory where:
\begin{itemize}
 \item the objects are algebras in $\calc$; 
 \item the 1-morphisms are bimodules; and
 \item the 2-morphisms are morphisms between bimodules. 
\end{itemize}
Composition of 1-morphisms is given by tensoring of bimodules, and the unit 1-morphism for any algebra $A$ is given by $A$ itself regarded as an $(A,A)$-bimodule.
\end{definition}
Notice that since $\calc$ is symmetric monoidal, the tensor product $A\otimes{B}$ for algebras $A$ and $B$ in $\calc$ is canonically an algebra. One can indeed show that the tensor product in $\calc$ induces a symmetric monoidal structure on $\Alg(\calc)$. Moreover, every object $A$ in $\Alg(\calc)$ admits a dual object with respect to this monoidal structure, namely $A^{op}$. More precisely, we have the following
\begin{lemma}
Let $A$ be an algebra in $\calc$. Then its dual is given by the opposite algebra $A^{op}$, and as evaluation and coevaluation we can take $A$ regarded as an  $(A\otimes{A}^{op},1_{\calc})$-bimodule and an $(1_{\calc},A^{op}\otimes{A})$-bimodule, respectively.
\end{lemma}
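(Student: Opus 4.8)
The plan is to verify directly that the triple $(A^{op}, \text{``}A\text{ as evaluation''}, \text{``}A\text{ as coevaluation''})$ satisfies the two zig-zag identities up to 2-isomorphism, as required by the definition of a dualisable object in a symmetric monoidal bicategory. First I would pin down the bimodule structures precisely: the multiplication on $A\otimes A^{op}$ is the one induced componentwise (using the symmetry of $\calc$ to reorder factors), so that $A$ becomes an $(A\otimes A^{op}, 1_{\calc})$-bimodule via the map $A\otimes A\otimes A \xrightarrow{} A$ that multiplies the first factor on the left and the second (via $A^{op}$, hence through $\sigma$) on the right — i.e., $m\circ(m\otimes\id)$ suitably braided; the $(1_{\calc}, A^{op}\otimes A)$-bimodule structure on $A$ (the coevaluation $c$) is defined symmetrically. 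I would denote $e: A\otimes A^{op} \to 1_{\calc}$ and $c: 1_{\calc} \to A^{op}\otimes A$ for these 1-morphisms in $\Alg(\calc)$.

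The heart of the argument is to compute the two composite 1-morphisms appearing in the zig-zag identities. For the first identity we must show $(e\otimes \id_A)\circ(\id_A\otimes c) \simeq \id_A = {}_AA_A$. Unwinding the definition of composition (tensor product of bimodules over the appropriate algebras) and of the monoidal product on $\Alg(\calc)$, this composite is a relative tensor product of the form $(A\otimes A) \otimes_{A\otimes A^{op}\otimes A} (A\otimes A)$ over the middle algebra, which I expect to simplify — using the standard canonical isomorphisms $A\otimes_A M \simeq M \simeq M\otimes_B B$ recalled just before the statement, together with associativity of relative tensor product and the symmetry of $\calc$ — to ${}_AA_A$. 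The key computational input is that tensoring $A$ over $A$ (on the appropriate side) is the identity, applied twice, once to collapse the $A^{op}$ factor and once the $A$ factor. The second zig-zag identity, $(\id_{A^{op}}\otimes e)\circ(c\otimes\id_{A^{op}}) \simeq \id_{A^{op}} = {}_{A^{op}}(A^{op})_{A^{op}}$, is verified by the same manipulation with the roles of $A$ and $A^{op}$ interchanged; here one uses that $A$, viewed with its $(1_{\calc}, A^{op}\otimes A)$- and $(A\otimes A^{op},1_{\calc})$-structures, represents $A^{op}$ as an $(A^{op},A^{op})$-bimodule after the collapses, which again follows from the unit isomorphisms for relative tensor product.

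I would also remark that, since $\calc$ is symmetric, the two adjoint equivalences to be checked (left dual versus right dual) coincide, so it suffices to treat one of them; this is what makes $A^{op}$ simultaneously a left and right dual. The main obstacle is purely bookkeeping: carefully tracking the several symmetry isomorphisms $\sigma$ that intervene when one reorders tensor factors of $\calc$ inside the algebra $A\otimes A^{op}$ and inside the relative tensor products, and checking that the resulting canonical 2-isomorphisms are coherent (i.e., that no sign or twist obstructs the zig-zag). Since $\calc$ is symmetric (not merely braided), all such reorderings are canonically trivial up to coherent isomorphism, so the construction goes through; the verification is a direct unwinding parallel to the classical case $\calc = \Vect_{\fieldk}$, where this is the familiar statement that $A^{op}$ is the dual of $A$ in the Morita bicategory with $A$ as the evaluation/coevaluation bimodule. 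I would therefore present the computation in the base case and indicate that the finite-tensor-category version is obtained by replacing $\fieldk$-linear tensor products with the relative tensor products in $\calc$, invoking the canonical isomorphisms cited above from \cite{tenscat}.
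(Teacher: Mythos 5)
Your plan is correct: the direct verification of the two zig-zag identities, reducing each composite to a relative tensor product such as $(A\otimes A)\otimes_{A\otimes A^{op}\otimes A}(A\otimes A)$ and collapsing it via the unit isomorphisms $A\otimes_A M\simeq M\simeq M\otimes_B B$, is exactly the standard argument. Note that the paper itself states this lemma without proof (implicitly deferring to \cite{schompries,lurieTQFT}), so there is no in-paper proof to compare against; your outline supplies the expected verification, and since the definition of dualisability here only asks for the zig-zags up to \emph{some} 2-isomorphism, the coherence bookkeeping you flag as the main obstacle is not actually required.
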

In the lemma above, $1_{\calc}$ denotes the tensor unit in $\calc$.
We can then consider the fully dualisable subcategory $\Alg^{fd}(\calc)$ of $\Alg(\calc)$. 
\begin{definition}
An algebra $A$ in $\calc$ is called \emph{separable} if the multiplication morphism $m:A\otimes{A}\to A$ splits as a morphism of bimodules.
\end{definition}

\begin{prop}
An algebra $A$ in $\calc$ is fully dualisable if and only if it is separable.
\end{prop}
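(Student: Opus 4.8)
The plan is to establish both implications using the characterisation of full dualisability in a symmetric monoidal bicategory: since every object of $\Alg(\calc)$ is already dualisable (its dual being $A^{op}$, by the previous Lemma), an algebra $A$ is fully dualisable precisely when the evaluation and coevaluation $1$-morphisms witnessing the duality $A \dashv A^{op}$ admit both left and right adjoints in $\Alg(\calc)$. By the general theory (as used for ordinary algebras via \cite{davydovich,schompries}), this reduces to the condition that $A$, viewed as an $(A \otimes A^{op}, 1_{\calc})$-bimodule --- equivalently as an $(A \otimes A^{op})$-module, equivalently as an $A$-$A$-bimodule --- is a \emph{dualisable} object in the relevant category of bimodules, and the key observation is that this amounts exactly to $A$ being separable. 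So the first step is to recall precisely which adjointness data is needed and to reduce ``$A$ fully dualisable'' to ``$A$ is dualisable as an $(A,A)$-bimodule'' (here one uses that $\calc$ is rigid, so all the internal dualities of objects of $\calc$ are available and the only obstruction is at the level of bimodule structures).

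Next I would prove the implication ``separable $\Rightarrow$ fully dualisable''. Assuming $m : A \otimes A \to A$ splits as a bimodule map, let $s : A \to A \otimes A$ be a bimodule section, $m \circ s = \id_A$. From $s$ one constructs the adjunction data: the Casimir-type element/morphism $1_\calc \to A \otimes A$ obtained from $s$ and the unit $u$ serves as a coevaluation for $A$ regarded as an $(A,A)$-bimodule, with $m$ (composed with the relevant structure maps) as evaluation, and one checks the two triangle/zig-zag identities. The verification that these satisfy the triangle identities uses exactly the bimodule-linearity of $s$ together with the associativity and unit axioms for $A$; these are the standard separability computations and I would not reproduce them in full. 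This produces the required adjoints for the evaluation and coevaluation $1$-morphisms of the duality $A \dashv A^{op}$, hence $A \in \Alg^{fd}(\calc)$.

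For the converse, ``fully dualisable $\Rightarrow$ separable'', I would argue that if the coevaluation $1$-morphism $A : 1_\calc \to A^{op} \otimes A$ (i.e. $A$ as an $(A,A)$-bimodule) admits a right adjoint in $\Alg(\calc)$, then in particular the counit of that adjunction, fed into the zig-zag identity, produces a bimodule morphism $A \to A \otimes A$ which is a section of $m$; concretely, the adjoint of the identity $\id_A$ under the adjunction gives the splitting. Equivalently, one can phrase this via the fact that $A$ being a dualisable object in ${}_A\Mod_A$ forces the multiplication--unit data to be split, since the evaluation of the dual must invert $m$ on the relevant piece. Either way this is a short extraction from the zig-zag identities.

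The main obstacle I expect is not conceptual but bookkeeping: carefully tracking the various identifications --- right $A$-module versus left $A^{op}$-module, $(A \otimes A^{op})$-module versus $(A,A)$-bimodule, and the placement of the braiding $\sigma$ in the definition of $A^{op}$ --- so that the zig-zag identities for the monoidal duality $A \dashv A^{op}$ in $\Alg(\calc)$ line up correctly with the separability splitting of $m$ as an honest $(A,A)$-bimodule map. In particular one must ensure that all instances of the symmetry $\sigma$ cancel appropriately (this is where symmetry of $\calc$, as opposed to mere braiding, is used), and that ``splits as a morphism of bimodules'' in the Definition matches the bimodule structure appearing in the adjunction. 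Once these identifications are fixed, both implications are routine; I would relegate the explicit diagram chases to a remark or cite \cite{davydovich,schompries,DSPS} for the $\calc = \Vect_\fieldk$ prototype, noting that the argument there is category-internal and so transports verbatim to a symmetric fusion category $\calc$.
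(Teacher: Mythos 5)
Your proposal is correct and follows essentially the same route as the paper, which gives no argument beyond stating that the proof is obtained by closely mimicking that of \cite{schompries} for $\calc=\Vect_{\fieldk}$ --- indeed your sketch (reduce to adjunctibility of the evaluation/coevaluation bimodules, build the adjunction data from a bimodule section of $m$, and extract such a section from the zig-zag identities conversely) supplies more detail than the paper does. The one loose phrasing is the aside that ``$A$ is a dualisable object in ${}_{A}\Mod_{A}$'': under $\otimes_{A}$ the bimodule $A$ is the monoidal unit and hence trivially dualisable, so the condition you actually need (and which your main argument correctly uses) is that the coevaluation $1$-morphism $1_{\calc}\to A^{op}\otimes A$ admits adjoints in $\Alg(\calc)$, i.e.\ that $A$ is dualisable as an $(A\otimes A^{op})$-module.
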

\begin{proof}
The proof is obtained by closely mimicing that in \cite{schompries}.
\end{proof}
\begin{rmk} For $\calc=\Vect_{\fieldk}$, we have that $\Alg(\calc)=\Alg$.
\end{rmk}
\begin{rmk}Notice that the ``finite-dimensionality'' condition on $A$ is subsumed by the fact that $\calc$ is rigid.
\end{rmk}
The objects of $\Alg^{fd}(\calc)$ are then the separable algebras in $\calc$, and the 1-morphisms are bimodules $_{A}M_{B}$ which admit right and left adjoints $(_{A}M_{B})^{\vee}$ and ${}^{\vee}(_{A}M_{B})$.

We can now consider the pseudofunctor\footnote{We work under the tacit assumption that right and left adjoints for 1-morphisms have been chosen.}
\begin{equation}
(-)^{\circ}:\Alg^{fd}(\calc)^{co}  \to \Alg^{fd}(\calc)
\end{equation}
defined as follows:
\begin{itemize}
\item to a separable algebra $A$ it assigns $A^{op}$;
\item to a bimodule $_{A}M_{B}$ it assigns  $_{A^{op}}{M^{\vee}}_{B^{op}}$; and
\item to $f^{op}:_{A}M_{B}\to{_{A}N_{B}}$ it assigns $f^{\vee}:_{A^{op}}{M^{\vee}}_{B^{op}}\to _{A^{op}}{N^{\vee}}_{B^{op}}$
\end{itemize}

Following the ideas and techniques discussed in the previous sections, we formulate the following
\begin{claim}
The bifunctor $(-)^{\circ}$ can be canonically made into a weak duality involution on $\Alg^{fd}(\calc)$.
\end{claim}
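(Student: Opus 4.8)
The plan is to mirror, essentially verbatim, the construction of the weak duality involution on $\Alg^{fd}$ given in Section \ref{section:alg}, replacing every concrete manipulation of $\homm_B(M,B)$ and $\homm_A(A,A)$ by the corresponding abstract statement about adjoints of $1$-morphisms and duals of objects in the symmetric monoidal bicategory $\Alg(\calc)$. First I would verify that $(-)^\circ$ as defined is indeed a pseudofunctor $\Alg^{fd}(\calc)^{co}\to\Alg^{fd}(\calc)$: this requires the invertible $2$-morphisms $(M\otimes_B N)^\vee \Rightarrow N^\vee \otimes_{B^{op}} M^\vee$ (note the order swap, which is exactly what makes the target $\cala^{co}$ rather than $\cala^{op}$) and $1_A^\vee \Rightarrow 1_{A^{op}}$, together with the coherence hexagon/pentagon. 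The first comes from the standard fact that adjoints compose contravariantly — $g\circ f \dashv f^\vee \circ g^\vee$ for composable $1$-morphisms, applied to $1$-morphisms in $\Alg(\calc)$ which by separability all admit ambidextrous adjoints — and the second from the fact that $1_A$ is its own adjoint, reinterpreted through the canonical identification of the unit of $\Alg(\calc)(A^{op},A^{op})$; both, and their coherence, follow from the uniqueness-up-to-unique-iso of adjoints (Theorem \ref{thm:adjointequiv} and the ensuing remark).

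Next I would construct the pseudonatural adjoint equivalence $\et : 1_{\Alg(\calc)} \Rightarrow (-)^\circ \circ ((-)^\circ)^{co}$. By \cite{gurski} it suffices to give invertible $1$-morphisms $\et_A : A \to A^{op\,op} = A$ — take the identity bimodule ${}_A A_A$ — and invertible $2$-morphisms $\et_M$ filling the naturality squares \eqref{nat square}, witnessing $M^{\circ\circ}\simeq M$. Here the point is that in any bicategory the double right adjoint $(M^\vee)^\vee$ of a $1$-morphism that is part of an adjoint equivalence, or more generally of any $1$-morphism with invertible unit/counit (which holds for all $1$-morphisms of $\Alg^{fd}(\calc)$ since separability forces all bimodules to be "dualisable" in the appropriate $1$-categorical sense), is canonically isomorphic to $M$ itself, and this iso is natural in $2$-morphisms because it is built purely from the adjunction data. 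Finally, for the invertible modification $\zeta$ with components $\zeta_A : \et_{A^\circ} \Rightarrow (\et_A)^\circ$, I would take (as in Section \ref{section:alg}) the inverse of the canonical comparison $(\et_A)^\circ = (1_A)^\vee \Rightarrow 1_{A^{op}} = \et_{A^\circ}$ that already appears in the coherence data for $(-)^\circ$; the compatibility diagram of Definition \ref{def:duality} then reduces to a pasting identity among these canonical comparisons, which holds by coherence (uniqueness of adjoints).

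The main obstacle I anticipate is not any single verification but rather making precise, once and for all, the dictionary that replaces the explicit $\hom$-space computations of Appendix \ref{appendix:proof} — where one really computes with elements and evaluation maps — by purely bicategorical statements about adjunctions in $\Alg(\calc)$. Concretely, the steps in Section \ref{section:alg} repeatedly use that $\homm_B(M,B)$ with its evident bimodule structure \emph{is} the right adjoint of $M$, that $\homm_C(M\otimes_B N, C)\simeq \homm_C(N,C)\otimes_B \homm_B(M,B)$ naturally, and that these isomorphisms satisfy prescribed coherences; in the general setting one must instead invoke: (i) every separable algebra's bimodules admit adjoints (by the proposition identifying $\Alg^{fd}(\calc)$ with separable algebras, proved by mimicking \cite{schompries}); (ii) adjoints are unique up to unique coherent isomorphism and compose contravariantly (Theorem \ref{thm:adjointequiv} and remark, plus standard $2$-categorical lemmas); and (iii) the resulting comparison $2$-morphisms automatically satisfy every needed compatibility by the coherence theorem for bicategories together with this uniqueness. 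Once this dictionary is in place, the proof of the Claim is a line-by-line transcription of the proof of Theorem \ref{circ is duality inv}, with the sole genuinely new ingredient being the observation that no finite-dimensionality or $\mathrm{Vect}_{\fieldk}$-specific argument was ever used beyond the existence and uniqueness of adjoints — which is exactly what separability and rigidity of $\calc$ provide. I would therefore present the proof as: "Repeat the construction of Section \ref{section:alg} and the proof of Appendix \ref{appendix:proof}, everywhere reading '$M^\vee$' for '$\homm_B(M,B)$' and invoking uniqueness of adjoints in place of the explicit natural isomorphisms of Appendix \ref{appendix:modules}."
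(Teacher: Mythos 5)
Your overall strategy is exactly the one the paper intends: the Claim is stated without proof, accompanied only by the remark that the coherence data for $(-)^{\circ}$ should arise from the universal properties of adjoints of $1$-morphisms, so ``transcribe Section \ref{section:alg} and Appendix \ref{appendix:proof} through the dictionary $\homm_B(M,B)\leftrightarrow M^{\vee}$'' is the right plan. There is, however, one genuine gap in your outline, at the one place where the concrete proof uses a real theorem rather than formal adjunction yoga: the isomorphism $M^{\vee\vee}\simeq M$ needed for the components $\et_M$ of the pseudonatural equivalence. Your justification --- that every $1$-morphism of $\Alg^{fd}(\calc)$ has ``invertible unit/counit'' --- is false: invertibility of the unit and counit would make every bimodule an adjoint equivalence, i.e.\ every pair of separable algebras Morita equivalent. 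Having a right adjoint $M^{\vee}$ only tells you that $M$ is a \emph{left} adjoint of $M^{\vee}$; it does not tell you that $M$ is a \emph{right} adjoint of $M^{\vee}$, which is what $M^{\vee\vee}\simeq M$ amounts to. This coincidence of the double right adjoint with the original $1$-morphism is not a consequence of uniqueness of adjoints; it is extra structure (ambidexterity of adjoints, i.e.\ a pivotal-type structure on $\Alg^{fd}(\calc)$) that must be extracted from separability, semisimplicity and the symmetry of $\calc$ --- exactly as, in the concrete case, the paper does not deduce ${}_AM_B\simeq \homm_A(\homm_B(M,B),A)$ formally but invokes Theorem \ref{thm:adj2}, an honest double-dual theorem for projective modules.

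Two smaller points also deserve care. First, the parenthetical claiming that the order swap $(g\circ f)^{\vee}\simeq f^{\vee}\circ g^{\vee}$ is ``what makes the target $\cala^{co}$'' is backwards: that swap is absorbed by passing to opposite algebras, so that $(-)^{\circ}$ preserves the direction and order of composition of $1$-morphisms, and it is the contravariance of $f\mapsto f^{\vee}$ on $2$-cells that forces the domain to be $\cala^{co}$ rather than $\cala^{op}$. Second, Theorem \ref{thm:adjointequiv} concerns promoting equivalences to adjoint equivalences, not uniqueness of adjoints of arbitrary $1$-morphisms; the coherence of your comparison $2$-cells should instead be referred to the standard uniqueness-up-to-unique-isomorphism of right adjoints in a bicategory. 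With the $M^{\vee\vee}\simeq M$ issue repaired, the rest of the transcription goes through as you describe.
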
 
\begin{rmk}Similar to Section \ref{section:alg}, the coherence data for $(-)^{\circ}$ arise from the universal properties of adjoints of 1-morphisms in a bicategory.
\end{rmk}
\subsection{Module categories} In this section we introduce a substitute for KV-vector spaces, in order to be able to construct a bifunctor $\Rep$ from $\Alg^{fd}(\calc)$. In the following, $\calc$ is a category satisfying the same assumptions as in Section \ref{moritacat}. Also here, our main references are \cite{tenscat,DSPS}.
\begin{definition}
A left $\calc$-module category is a locally finite abelian $\fieldk$-linear category $\calm$ equipped with a bilinear functor $\otimes^{\calm}:\calc\times\calm\to\calm$ together with isomorphisms witnessing the natural conditions for an action.
\end{definition}
A right $\calc$-module category can be similarly defined.
\begin{definition}
A left $\calc$-module functor between left $\calc$-module categories $\calm$ and $\caln$ is a linear functor $\calf:\calm\to\caln$ together with isomorphisms $f_{x,m}:\calf(x\otimes{m})\simeq x \otimes\calf(m)$ satisfying the appropriate pentagon and triangle relations. 
\end{definition}
\begin{definition}
A left $\calc$-module natural transformation between left $\calc$-module functors $\calf$ and $\calg$ is a natural transformation $\eta:\calf\to\calg$ satisfying the condition $(id_{x}\otimes\eta_{m})\circ f_{x,m}=g_{x,m}\circ\eta_{x\otimes{m}}$.
\end{definition}
Right $\calc$-module functors and natural transformations can be defined similarly.

Left $\calc$-module categories together with left exact $\calc$-module functors and $\calc$-module natural transformations form a 2-category $\Mod(\calc)$.
\begin{ex}
Let $A$ be an algebra in $\calc$. Then $\Mod_{A}$ is canonically a left $\calc$-module category via the functor
\begin{align}
\begin{split}
\calc\times\Mod_{A} & \to\Mod_{A}\\
(x,m)& \to x\otimes{m}
\end{split}
\end{align}
\end{ex}
\begin{rmk}
In \cite{kv}, 2-vector spaces were introduced as module categories over $\Vectt_{\fieldk}$ with additional properties. 
\end{rmk}
For $\calm$ a left $\calc$-module category, let $\Endo_{l}(\calm)$ denote the $\fieldk$-linear monoidal category of left exact $\calc$-module functors from $\calm$ to $\calm$. A useful result concerning $\calc$-module categories is the following \cite{tenscat}
\begin{thm}\label{thm:monstruct} There is a bijection between structures of a left $\calc$-module category on $\calm$ and $\fieldk$-linear monoidal functors $\calc\to\Endo_{l}(\calm)$.
\end{thm}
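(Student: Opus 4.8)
The plan is to prove Theorem \ref{thm:monstruct} by unwinding both sides of the claimed bijection into the same collection of structure maps and coherence equations, so that the correspondence becomes a tautology once the dictionary is set up. A left $\calc$-module category structure on $\calm$ consists of a bilinear functor $\otimes^\calm:\calc\times\calm\to\calm$ together with associativity isomorphisms $a_{x,y,m}:(x\otimes y)\otimes^\calm m\simeq x\otimes^\calm(y\otimes^\calm m)$ and unit isomorphisms $\ell_m:1_\calc\otimes^\calm m\simeq m$ subject to a pentagon and a triangle. The target side consists of a $\fieldk$-linear \emph{monoidal} functor $G:\calc\to\Endo_l(\calm)$, i.e.\ a linear functor together with natural isomorphisms $G(x)\circ G(y)\simeq G(x\otimes y)$ and $\id_\calm\simeq G(1_\calc)$ satisfying the monoidal-functor coherences. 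First I would pass back and forth between ``a bilinear functor $\calc\times\calm\to\calm$'' and ``a $\fieldk$-linear functor $\calc\to\Endo_{\fieldk}(\calm)$'' landing in left exact endofunctors: given $\otimes^\calm$, set $G(x):=x\otimes^\calm(-)$; conversely given $G$, set $x\otimes^\calm m:=G(x)(m)$, using linearity of $G$ in $x$ and of each $G(x)$ in $m$ to recover bilinearity. One checks $G(x)$ is left exact exactly because $x\otimes^\calm(-)$ is (this is where local finiteness and the module-action axioms, which force $x\otimes^\calm(-)$ to have the relevant exactness, enter).

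The second step is to match the higher structure. Under $G(x)=x\otimes^\calm(-)$, a natural isomorphism $G(x)\circ G(y)\Rightarrow G(x\otimes y)$ is precisely, componentwise at $m$, an isomorphism $x\otimes^\calm(y\otimes^\calm m)\simeq(x\otimes y)\otimes^\calm m$, i.e.\ the inverse of the associator $a_{x,y,m}$; naturality in $m$ corresponds to naturality of $a$ in $m$, and naturality in $x$ and $y$ — built into the definition of a transformation between the composite functors $\calc\times\calc\to\Endo_l(\calm)$ — corresponds to naturality of $a$ in $x$ and $y$. Likewise the unit isomorphism $\id_\calm\Rightarrow G(1_\calc)$ is exactly the collection $\ell_m^{-1}:m\simeq 1_\calc\otimes^\calm m$. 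I would then observe that the pentagon axiom for a monoidal functor (relating the two ways of comparing $G(x)G(y)G(z)$ with $G(x\otimes y\otimes z)$ via the tensor-structure isos and the associators of $\calc$ and of $\Endo_l(\calm)$) is, after the above substitutions and using that composition in $\Endo_l(\calm)$ is strictly associative, literally the pentagon axiom for the module associator $a$; and the two unit-triangle axioms for a monoidal functor become the single triangle axiom for $(a,\ell)$ (one of them being automatic, as usual, from the other coherences, cf.\ the standard Kelly argument).

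Finally I would note that the two constructions $(\otimes^\calm,a,\ell)\mapsto(G,\text{tensor iso},\text{unit iso})$ and back are mutually inverse on the nose: plugging $G(x)=x\otimes^\calm(-)$ into the formula $x\otimes^\calm m:=G(x)(m)$ returns the original functor, and similarly for the coherence data, since at each stage we only rewrote the same arrows under a change of notation. Hence the two classes of structures are in bijection. The main obstacle — really the only content beyond bookkeeping — is verifying that the monoidal-functor pentagon and the module pentagon are the same equation rather than merely ``corresponding'' ones: this requires carefully tracking how the associativity constraint of the strict category $\Endo_l(\calm)$ (trivial) and the associativity constraint of $\calc$ interleave with the tensor-structure isomorphisms, and checking that the exactness requirement on objects of $\Endo_l(\calm)$ is preserved in both directions. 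I expect this to be routine but notation-heavy, which is presumably why the paper cites \cite{tenscat} for the details; a complete write-up would draw the relevant hexagon/pentagon diagrams and observe they match termwise.
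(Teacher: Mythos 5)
Your proof is correct and is exactly the standard ``currying'' argument; the paper itself gives no proof but cites \cite{tenscat}, where this is established in the same way (module associator $\leftrightarrow$ monoidal structure maps of $G$, pentagon $\leftrightarrow$ monoidal-functor coherence, using strictness of composition in $\Endo_{l}(\calm)$). The only point worth sharpening is the exactness step: $x\otimes^{\calm}(-)$ is exact not because of ``local finiteness and the module axioms'' per se, but because rigidity of $\calc$ makes $x^{*}\otimes^{\calm}(-)$ and ${}^{*}x\otimes^{\calm}(-)$ into adjoints of it, so it lands in (left) exact endofunctors.
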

In the following, we  assume that all our module categories $\calm$ are semi-simple as abelian categories. This is done in view of the following 
\begin{prop}Let $\calm$ be a $\calc$-module category which is also semi-simple. Then any left exact $\calc$-module functor $\calf:\calm\to\calm$ is exact. 
\end{prop}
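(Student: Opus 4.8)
The claim is that on a semisimple $\calc$-module category $\calm$, every left exact $\calc$-module functor $\calf:\calm\to\calm$ is automatically exact, hence right exact as well. The plan is to reduce the question to a statement about additive functors between semisimple abelian categories, for which the $\calc$-module structure is essentially irrelevant.

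First I would recall that in a semisimple abelian category, every short exact sequence splits: if $0\to X\to Y\to Z\to 0$ is exact, then $Y\simeq X\oplus Z$ with the maps being the canonical inclusion and projection (up to isomorphism). This is the key structural input and uses only that $\calm$ is semisimple as an abelian category, not the module structure. Next I would observe that an additive functor between abelian categories automatically preserves split short exact sequences, since it preserves finite biproducts and the splitting maps; it therefore preserves any short exact sequence whose source category is semisimple. Applying this to $\calf$ (which is in particular additive, being $\fieldk$-linear), we get that $\calf$ sends every short exact sequence in $\calm$ to a split, hence exact, short exact sequence in $\calm$. Thus $\calf$ is exact, and in particular right exact; the $\calc$-module functor structure $f_{x,m}$ plays no role in the argument and the hypothesis of left exactness is not even needed.

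Since there is essentially no obstacle here, the only point requiring a little care is making sure the notion of ``exact'' being used is the standard one (preservation of all short exact sequences, equivalently of all finite limits and colimits, equivalently both left and right exact), and that ``left exact $\calc$-module functor'' as defined earlier indeed has an underlying additive functor. Both are immediate from the definitions given in the excerpt: $\calc$-module functors are required to be linear functors, hence additive. So the proof is really just the two observations above chained together. One could phrase it as: let $\Sigma: 0\to X\to Y\to Z\to 0$ be a short exact sequence in $\calm$; by semisimplicity $\Sigma$ is split, so $\calf(\Sigma)$ is split exact in $\calm$; hence $\calf$ preserves all short exact sequences, i.e. $\calf$ is exact.
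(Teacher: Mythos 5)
Your proof is correct, and it is the standard argument one would expect here: the paper states this proposition without proof, and the intended justification is exactly the observation that short exact sequences in a semisimple abelian category split and that additive ($\fieldk$-linear) functors preserve split exact sequences. Your remark that the left-exactness hypothesis is not actually used is also accurate; it appears in the statement only because left exact module functors are the 1-morphisms of the ambient 2-category $\Mod(\calc)$.
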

For $\calm$ a semi-simple $\calc$-module category, we denote with $\Endo(\calm)$ the monoidal category of exact functors.
\begin{lemma}
For $\calm$ a semisimple $\calc$-module category, $\Endo(\calm)$ is a tensor category, where duals are given by adjoints.
\end{lemma}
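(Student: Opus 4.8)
The plan is to prove that for a semisimple $\calc$-module category $\calm$, the monoidal category $\Endo(\calm)$ of exact $\calc$-module endofunctors is a tensor category in which duals are computed by adjoints. First I would recall that, since $\calm$ is semisimple and locally finite abelian over $\fieldk$, it is equivalent to $\Mod_A$ for some finite-dimensional semisimple $\fieldk$-algebra $A$ (by the structure proposition for finite linear categories), so $\calm$ has finitely many simple objects and finite-dimensional Hom-spaces; this already gives that $\Endo(\calm)$ is $\fieldk$-linear, locally finite, abelian, and that the monoidal unit $\mathrm{id}_{\calm}$ is a simple object (its endomorphisms are the $\calc$-module natural transformations of the identity, which form a finite-dimensional algebra, in fact $\fieldk$ up to the usual connectedness caveats). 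The monoidal structure is composition of functors, which is exact in each variable because all the functors involved are exact; bilinearity over $\fieldk$ is immediate.

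The heart of the argument is rigidity: I would show that every exact $\calc$-module functor $\calf:\calm\to\calm$ admits both a left and a right adjoint $\calf$, and that these adjoints are themselves exact $\calc$-module functors, so that they provide the dual objects in $\Endo(\calm)$. Existence of adjoints follows from the Eilenberg–Watts / special adjoint functor machinery together with exactness: an exact functor between finite (equivalently, module categories over finite-dimensional algebras) categories preserves all limits and colimits that exist, and finiteness guarantees the representability hypotheses, so both adjoints exist. The key point particular to this setting is that the adjoint of a \emph{$\calc$-module} functor is again canonically a $\calc$-module functor: using rigidity of $\calc$ (it is a fusion, hence rigid, category), one transports the module-structure isomorphisms $f_{x,m}:\calf(x\otimes m)\simeq x\otimes\calf(m)$ across the adjunction by dualising in $\calc$, i.e. one builds the module structure on the right adjoint $\calf^{r}$ from the isomorphisms $f_{x^{*},-}$ and the duality data $x\otimes x^{*}\to 1_{\calc}$, $1_{\calc}\to x^{*}\otimes x$. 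One then checks the pentagon and triangle axioms for $\calf^{r}$; these follow formally from the corresponding axioms for $\calf$ and the zig-zag identities in $\calc$. Finally, the adjunction unit and counit, being natural transformations of $\calc$-module functors (again a formal check), give precisely the evaluation and coevaluation $1$-morphisms exhibiting $\calf^{r}$ as a right dual of $\calf$ in $\Endo(\calm)$; symmetrically for the left dual. Since every object thus has both duals and $\Endo(\calm)$ is a finite $\fieldk$-linear abelian monoidal category with simple unit, it is a (finite) tensor category in the sense of \cite{tenscat}, with duals given by adjoints.

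I expect the main obstacle to be the bookkeeping in the second step: verifying that the adjoint of a $\calc$-module functor carries a canonical $\calc$-module structure satisfying the pentagon/triangle coherence, and that the adjunction unit and counit are $\calc$-module natural transformations. This is where rigidity of $\calc$ is essential and where one must be careful about the direction in which one dualises (left versus right adjoint, and correspondingly left versus right dual in $\calc$). Everything else — exactness of composition, finiteness and the resulting abelian/finiteness properties of $\Endo(\calm)$, simplicity of the unit — is either immediate or a direct consequence of the identification $\calm\simeq\Mod_A$ with $A$ semisimple. I would note that the semisimplicity hypothesis is used twice: once to ensure that left exact $\calc$-module functors are automatically exact (the preceding proposition), so that $\Endo(\calm)$ is well-defined as stated, and once, more mildly, to make the finiteness arguments for adjoint existence transparent, though the latter would also go through for general finite module categories.
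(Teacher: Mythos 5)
The paper does not actually prove this lemma: it is stated as a known fact in Section~\ref{section:general}, where the authors explicitly defer details and lean on \cite{tenscat} (the standard reference being the results on module categories and internal Homs there, e.g.\ the fact that adjoints of module functors are module functors). Your sketch reconstructs exactly that standard argument --- reduce to $\Mod_A$ with $A$ semisimple for the finiteness and exactness statements, then use rigidity of $\calc$ to transport the module structure $f_{x,m}$ across the adjunction and check that the unit and counit are module natural transformations --- so it is correct in outline and matches what the paper is implicitly invoking. The one point worth keeping visible is the caveat you already flag: $\End(\id_{\calm})$ is $\fieldk$ only when $\calm$ is indecomposable as a $\calc$-module category, so in general $\Endo(\calm)$ is a multitensor rather than a tensor category; the lemma as stated glosses over this.
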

Let $\calm$ be a left (semi-simple) $\calc$-module category, and consider the following composition of monoidal functors
\begin{equation}\label{eq:oppositemodule}
\calc\to\Endo(\calm)\xrightarrow{(-)^{R}}\Endo(\calm)^{mp}\simeq\Endo(\calm^{op})^{rev}
\end{equation}
where the first functor is the one given by Theorem \ref{thm:monstruct}, and
where $(-)^{R}$, $(-)^{rev}$ and $(-)^{mp}$ denote taking the right adjoint, taking the monoidally opposite category, and taking the monoidally opposite opposite category, respectively.

The monoidal functor in (\ref{eq:oppositemodule}) canonically provides a monoidal functor $\calc^{rev}\to\Endo(\calm^{op})$, and consequently\footnote{Recall that $\calc$ is symmetric monoidal, hence $\calc^{rev}\simeq\calc$.} a monoidal functor $\calc\to\Endo(\calm^{op})$. In other words, the composition above defines a left $\calc$-module structure on $\calm^{op}$. For notational clarity we denote by $\calm^{\circ}$ the $\calc$-module category $\calm^{op}$ equipped with the module structure above. Notice that we have a canonical identification $\calm^{\circ\circ}\simeq\calm$ as left $\calc$-module categories\footnote{This is essentially due to the fact that for any pair of functors $F$ and $G$ between categories, $F\dashv{G}$ implies $G^{op}\dashv F^{op}$.}.
\begin{rmk}
Any category $\calm$ enriched over $\calc$ as above can be canonically given the structure of a left $\calc$-module structure. Then $\calm^{\circ}$ is the left $\calc$-module category corresponding to the opposite of $\calm$ as a $\calc$-enriched category\footnote{Note that the opposite of an enriched category can be defined only if the enriching category is symmetric monoidal.}.  
\end{rmk}
One can argue straightforwardly that for any (exact) $\calc$-module functor $\calf:\calm\to\caln$, the opposite functor $\calf^{op}$ can be given the structure of a $\calc$-module functor $\calf^{\circ}$ between $\calm^{\circ}$ and $\caln^{\circ}$.  The story is similar for natural transformations.

Let $\Mod^{ss}(\calc)$ denote the 2-category of semi-simple left $\calc$-module categories, exact $\calc$-module functors and $\calc$-module natural transformations.

Consider the pseudofunctor
\begin{equation}
(-)^{\circ}:\Mod^{ss}(\calc)^{co} \to \Mod^{ss}(\calc)
\end{equation}
defined as follows:
\begin{itemize}
\item to a module category $\calm$ it assigns $\calm^{\circ}$;
\item to a module functor $\calf:\calm\to\caln$ it assigns $\calf^{\circ}:\calm^{\circ}\to\caln^{\circ}$; and
\item to $\eta^{op}:\calf\to\calg$ it assigns $\eta^{\circ}:\calf^{\circ}\to\calg^{\circ}$
\end{itemize}
It is reasonable to expect then the following
\begin{claim} The bifunctor $(-)^{\circ}$ can be canonically made into a weak duality involution on $\Mod^{ss}(\calc)$.
\end{claim}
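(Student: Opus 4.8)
The strategy is to mimic, almost verbatim, the construction carried out for $\Alg^{fd}$ in Section~\ref{section:alg}, replacing ``opposite algebra'' by ``opposite module category'' and ``dualising bimodules'' by ``taking adjoints of module functors'', and then to invoke the fact that all the relevant coherence isomorphisms are \emph{canonical}, so that the required diagrams commute for formal reasons. Concretely, I would first record that $(-)^{\circ}$ is a genuine pseudofunctor $\Mod^{ss}(\calc)^{co}\to\Mod^{ss}(\calc)$: functoriality on 2-morphisms holds strictly, and the compositor and unitor 2-cells
\begin{equation*}
(\calg\circ\calf)^{\circ}\Rightarrow\calf^{\circ}\circ\calg^{\circ},\qquad (\id_{\calm})^{\circ}\Rightarrow\id_{\calm^{\circ}}
\end{equation*}
come from the canonical natural isomorphisms $(G\circ F)^{op}\cong F^{op}\circ G^{op}$ and from the identification of the right adjoint of a composite with the composite of right adjoints (in the reverse order), which is exactly the datum already used in \eqref{eq:oppositemodule} to define the $\calc$-module structure on $\calm^{\circ}$. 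The pentagon and triangle for these 2-cells reduce to the uniqueness-up-to-unique-isomorphism of adjoints, hence hold automatically.

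Next I would build the pseudonatural adjoint equivalence $\et$. Following Gurski (Theorem~\ref{thm:adjointequiv} and the remark after it), it suffices to give an invertible pseudonatural transformation $1_{\Mod^{ss}(\calc)}\Rightarrow(-)^{\circ}\circ((-)^{\circ})^{co}$; its 1-morphism components are the canonical identifications $\calm^{\circ\circ}\simeq\calm$ of $\calc$-module categories already noted in the excerpt (arising from $F\dashv G\Rightarrow G^{op}\dashv F^{op}$, applied twice), and its 2-morphism components $\et_{\calf}$ are the obvious canonical isomorphisms comparing $\calf$ with $\calf^{\circ\circ}$ under this identification. Because every 1- and 2-cell in sight is the canonical one, pseudonaturality of $\et$ and invertibility of all its components are automatic. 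For the modification $\zeta$ I would, exactly as in Section~\ref{section:alg}, take $\zeta_{\calm}\colon \et_{\calm^{\circ}}\xrightarrow{\simeq}(\et_{\calm})^{\circ}$ to be (the inverse of) the canonical 2-cell already appearing in the compositor data of $(-)^{\circ}$ — i.e. the coherence isomorphism comparing the two ways of identifying $\calm^{\circ\circ\circ}$ with $\calm^{\circ}$.

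The only real content is then the verification of Shulman's compatibility diagram for $\zeta$, $\et$, and the coherence data of $(-)^{\circ}$. I expect this to be the main obstacle, and I would handle it the same way the $\Alg^{fd}$ case is handled in Appendix~\ref{appendix:proof}: unwind both composites of the diagram down to a pasting of canonical 2-cells between $\calc$-module functors, and observe that both sides are canonical maps between the \emph{same} pair of $\calc$-module functors, so they agree by a coherence/uniqueness argument for adjoints (every region of the pasting is determined by a universal property, and a canonical map between two fixed objects of a category in which all the relevant morphisms are isomorphisms is unique). Since all the structure is built from adjunction data and the symmetry of $\calc$, no genuinely new identity is needed beyond those already invoked in Sections~\ref{section:alg}--\ref{section:rep}. (Strictly speaking, one should also check that the semisimplicity hypothesis is used only to guarantee that left exact $\calc$-module endofunctors are exact, so that $(-)^{\circ}$ lands in $\Mod^{ss}(\calc)$ and adjoints exist inside this 2-category; this is precisely the role played by semisimplicity of the algebras in Section~\ref{section:alg}.) As the excerpt itself flags this as a \emph{claim} rather than a theorem, I would present the argument at the level of a detailed sketch, deferring the full pasting-diagram bookkeeping, exactly as is done for the parallel statement $\Alg^{fd}(\calc)$.
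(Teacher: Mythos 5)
The paper does not actually prove this statement: it is left as a \emph{Claim}, with the verification explicitly deferred to future work, so there is no proof to compare yours against line by line. Your overall strategy --- transport the construction of Section~\ref{section:alg}, take $\et_{\calm}$ to be the canonical identification $\calm^{\circ\circ}\simeq\calm$, take $\zeta_{\calm}$ from the coherence data of $(-)^{\circ}$, and reduce Shulman's axiom to properties of adjunctions --- is exactly the route the paper gestures at, and the remark about where semisimplicity enters is correct.

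Two points, however, are genuinely wrong or insufficient as written. First, the compositor is mis-stated: for functors $\calf\colon\calm\to\caln$ and $\calg\colon\caln\to\calp$ one has $(\calg\circ\calf)^{op}=\calg^{op}\circ\calf^{op}$ \emph{on the nose}; there is no order reversal, and indeed $\calf^{op}\circ\calg^{op}$ does not even typecheck. A duality involution $\cala^{co}\to\cala$ is covariant on 1-morphisms (compare \eqref{eq:coe1}, where $(N\circ M)^{\circ}\Rightarrow N^{\circ}\circ M^{\circ}$), so the compositor you need is $(\calg\circ\calf)^{\circ}\Rightarrow\calg^{\circ}\circ\calf^{\circ}$, and its actual content is not an order-reversal of underlying functors but a comparison of the two $\calc$-module-functor structures on $\calg^{op}\circ\calf^{op}$; that is where the right-adjoint data of \eqref{eq:oppositemodule} (which does reverse order, but at the level of the \emph{action} functors $x\otimes^{\calm}(-)$, not of $\calf$ and $\calg$) enters. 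Second, the principle you invoke to dispose of Shulman's compatibility diagram --- that any two ``canonical'' maps between the same pair of objects coincide --- is not valid in general (two canonical isomorphisms built from a symmetry or from different adjunction data need not agree; this is why coherence theorems require proof). The honest version of your argument is to reduce both pastings to composites of units and counits of the adjunctions $x\otimes^{\calm}(-)\dashv(x\otimes^{\calm}(-))^{R}$ and apply the triangle identities, in the same way that Appendix~\ref{appendix:proof} performs an explicit (there, element-level) computation rather than appealing to canonicity. That verification is precisely the content the paper declines to supply, so your sketch, as it stands, has the same gap the paper does --- which is acceptable for a \emph{Claim}, but should be flagged as the step still requiring work rather than as something that ``holds automatically''.
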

\subsection{Representations} Similar to what we have done in the previous sections of this paper, we can connect the bicategory $\Alg^{fd}(\calc)$ to $\Mod^{ss}(\calc)$ via the bifunctor $\Rep^{\calc}$ given by taking modules over algebras. To this aim, we can use the following results \cite{tenscat}
\begin{prop}Let $A$ be a separable algebra in a fusion category $\calc$. Then $\Mod_{A}$ is a semi-simple left $\calc$-module category.
\end{prop}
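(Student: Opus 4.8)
The statement to be proven is: if $A$ is a separable algebra in a fusion category $\calc$, then $\Mod_A$ is a semi-simple left $\calc$-module category. We already know from the earlier proposition that $\Mod_A$ is a finite $\fieldk$-linear category, and from the example in Section~\ref{section:general} that $\Mod_A$ carries a canonical left $\calc$-module structure via $(x,m)\mapsto x\otimes m$. So the only genuinely new content is semi-simplicity of $\Mod_A$ as an abelian category. The plan is to reduce this to the splitting of the multiplication map that defines separability, in exactly the same way that separability over a field forces every module to be projective.

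\textbf{Key steps.} First I would recall that an abelian category is semi-simple precisely when every short exact sequence splits, equivalently when every object is projective (and injective). So it suffices to show that an arbitrary right $A$-module $M$ in $\calc$ is projective in $\Mod_A$. Second, I would use the separability datum: a bimodule splitting $s\colon A\to A\otimes A$ of $m\colon A\otimes A\to A$, i.e. $m\circ s=\id_A$ with $s$ an $(A,A)$-bimodule morphism. The ``free'' functor $\calc\to\Mod_A$, $x\mapsto x\otimes A$, is left adjoint to the forgetful functor $\Mod_A\to\calc$; since $\calc$ is semi-simple, every object of $\calc$ is projective, hence every free module $x\otimes A$ is projective in $\Mod_A$. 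Third, I would show every $M_A$ is a retract of a free module: the action map $\rho\colon M\otimes A\to M$ is a surjection of $A$-modules (split in $\calc$ by $m\mapsto m\otimes 1$), and using $s$ one builds an $A$-module section $M\to M\otimes A$, $\sigma := (\id_M\otimes s)$ followed by the action of the first $A$-factor on $M$ — concretely $M\xrightarrow{\rho^{-1}\text{-ish}} M\otimes_A A \xrightarrow{\id\otimes s} M\otimes_A(A\otimes A)\simeq M\otimes A$, which is a right $A$-module map splitting $\rho$ because $s$ is a bimodule map. Hence $M$ is a retract of the free module $M\otimes A$, which is projective, so $M$ is projective. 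Fourth, since every object of $\Mod_A$ is projective, $\Mod_A$ is semi-simple; combined with the already-established module structure this gives the claim. As written in the excerpt, the cleanest write-up is simply to say the proof mimics the one in \cite{schompries} (or the classical separable-algebra argument), so I would phrase it as: ``The proof is obtained by mimicking the field case; separability of $A$ implies every object of $\Mod_A$ is a retract of a free $A$-module $x\otimes A$ with $x\in\calc$ projective, hence projective, so $\Mod_A$ is semi-simple.''

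\textbf{Main obstacle.} The one place needing care is verifying that the section $M\to M\otimes A$ constructed from the separability splitting $s$ is genuinely a morphism of \emph{right} $A$-modules, not merely of objects in $\calc$; this is where the bimodule (as opposed to merely one-sided) nature of $s$ is essential, and it is the exact analogue of the classical computation that a separability idempotent in $A\otimes A^{op}$ yields module-level splittings. Everything else — that $\calc$ semi-simple makes free modules projective, that retracts of projectives are projective, that ``all objects projective'' implies semi-simple in a finite abelian category — is standard categorical algebra and can be cited rather than reproved. I would also note that no finite-dimensionality hypothesis on $A$ is needed separately, since rigidity of $\calc$ already forces the relevant finiteness (as remarked in the excerpt).
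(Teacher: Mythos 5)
Your proposal is correct. Note that the paper does not actually prove this proposition — it is quoted from \cite{tenscat} without argument — and what you have written is precisely the standard proof one finds there: separability gives a bimodule section $s$ of $m$, so every $M_A$ is a retract of the free module $M\otimes A$ via $M\simeq M\otimes_A A\xrightarrow{\id\otimes_A s} M\otimes_A(A\otimes A)\simeq M\otimes A$; free modules are projective because the free functor is left adjoint to the exact forgetful functor and $\calc$ is semisimple; and in a finite abelian category all objects projective implies semisimple. You correctly identify the one delicate point, namely that the section is a right $A$-module map because $s$ is a morphism of bimodules, so the write-up is complete as a substitute for the citation.
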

\begin{prop}Let $A$ and $B$ be algebras in $\calc$, and let $_{A}M_{B}$ be an $(A,B)$-bimodule. Then the functor
\begin{equation}
(-)\otimes_{A}M:\Mod_{A}\to\Mod_{B}
\end{equation}
is a right exact $\calc$-module functor.
\end{prop}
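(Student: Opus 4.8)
The plan is to verify the three ingredients hidden in the phrase ``right exact $\calc$-module functor'': that $\calf:=(-)\otimes_A M$ is a well-defined functor $\Mod_A\to\Mod_B$, that it is right exact, and that it carries a coherent $\calc$-module functor structure; the argument parallels the special case $\calc=\Vect_{\fieldk}$ implicit in the definition of $\Rep$ in Section~\ref{section:finite}. Well-definedness is already contained in the material recalled in Section~\ref{moritacat} after \cite{tenscat}: for bimodules the relative tensor product inherits the outer module structures, so, using $N\in\Mod_A$ together with the $(A,B)$-bimodule structure of $M$, the object $N\otimes_A M$ acquires a canonical right $B$-action, and $f\otimes_A\id_M$ is a right $B$-module morphism whenever $f$ is a morphism of right $A$-modules; $\fieldk$-linearity of $\calf$ is immediate since $\otimes$ is bilinear.

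For right exactness, recall that $N\otimes_A M$ is the cokernel of a morphism $N\otimes A\otimes M\to N\otimes M$ assembled from the right $A$-action on $N$ and the left $A$-action on $M$. The endofunctors $(-)\otimes A\otimes M$ and $(-)\otimes M$ of $\calc$ are exact: since $\calc$ is rigid, tensoring on the right with a fixed object admits both a left and a right adjoint, namely tensoring with the appropriate dual object, hence is exact. Consequently a right exact sequence $N'\to N\to N''\to 0$ in $\Mod_A$ yields a commutative diagram with exact rows whose vertical maps are the defining morphisms above and whose cokernels are $N'\otimes_A M$, $N\otimes_A M$, $N''\otimes_A M$; since passing to cokernels is itself a right exact operation (snake lemma), $N'\otimes_A M\to N\otimes_A M\to N''\otimes_A M\to 0$ is exact. (Alternatively, one may exhibit a right adjoint to $\calf$ via an internal hom and conclude that $\calf$ preserves all colimits.)

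Finally, for the $\calc$-module functor structure, recall that the left $\calc$-actions on $\Mod_A$ and $\Mod_B$ send $(x,N)$ to $x\otimes N$ with the evident right $A$-action $\id_x\otimes\rho_N$. Because $\calc$ is rigid, $x\otimes(-)$ is exact and therefore preserves the cokernel defining $N\otimes_A M$; the associativity constraint $a$ of $\calc$ then identifies, naturally in all arguments, the cokernel diagram computing $(x\otimes N)\otimes_A M$ with the one computing $x\otimes(N\otimes_A M)$, and the induced comparison map on cokernels is the required isomorphism $f_{x,N}:(x\otimes N)\otimes_A M\xrightarrow{\ \simeq\ }x\otimes(N\otimes_A M)$. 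Naturality of $f_{x,N}$ follows from that of $a$, and $f_{x,N}$ is a morphism of right $B$-modules because $a$ intertwines the $B$-actions coming from $M$. The one step needing genuine care is checking that $\{f_{x,N}\}$ obeys the pentagon and triangle axioms of a $\calc$-module functor: after unwinding the definitions each side of each axiom is built entirely from instances of the associativity and unit constraints of $\calc$ (transported through the relevant cokernels), so the axioms reduce to the pentagon and triangle identities for $\calc$ and therefore hold; the effort there is purely bookkeeping of how each copy of $f$ is assembled from $a$ and the unitors. This establishes that $(-)\otimes_A M$ is a right exact $\calc$-module functor.
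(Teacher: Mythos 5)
Your argument is correct: the paper states this proposition as a cited result from \cite{tenscat} and gives no proof of its own, and what you have written is precisely the standard argument found there (inheritance of the outer $B$-action by the coequalizer, right exactness via exactness of $(-)\otimes X$ for rigid $\calc$ plus right exactness of cokernels, and the module-functor constraint descending from the associator of $\calc$). The only point worth flagging is that in the paper's setting $\calc$ is in fact semisimple, so exactness of the plain tensor functors is automatic; your appeal to rigidity is the more general (and still valid) justification.
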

We can now consider the following pseudofunctor
\begin{equation}
\Rep^{\calc}:\Alg^{fd}(\calc) \to \Mod^{ss}(\calc)
\end{equation}
defined as follows:
\begin{itemize}
\item to a separable algebra $A$ it assigns the semi-simple $\calc$-module category $\Mod_{A}$;
\item to a bimodule $_{A}M_{B}$ it assigns $(-)\otimes_{A}M:\Mod_{A}\to\Mod_{B}$; and
\item to a morphism $f:M\to N$ it assigns the associated natural transformation $(-)\otimes_{A}M \Rightarrow (-)\otimes_{A}N$.
\end{itemize}
The fact that the above is a pseudofunctor is a corollary of the properties of algebra bimodules and their tensor product. Indeed, the coherence data can be defined as in Section \ref{section:alg}.

We conclude the paper with the statement of a result that can be straightforwardly obtained following the lines of the special case proven in Section \ref{section:rep}.
\begin{claim} The bifunctor $\Rep^{\calc}$ can be canonically equipped with the data of a duality pseudofunctor between $\Alg^{fd}(\calc)$ and $\Mod^{ss}(\calc)$ equipped with their respective weak duality involutions.
\end{claim}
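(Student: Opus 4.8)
The plan is to mirror, verbatim where possible, the proof of Theorem~\ref{thm:rep} given in Section~\ref{section:rep}, replacing the specific constructions from $\Alg^{fd}$, $\KV$, and the adjoint-duality functor $\homm_A(-,A)$ by their categorical analogues in $\Alg^{fd}(\calc)$, $\Mod^{ss}(\calc)$, and the duality $M \mapsto M^{\vee}$ coming from adjoints of $1$-morphisms. First I would pin down the pseudonatural adjoint equivalence $\ii$: for a separable algebra $A$ in $\calc$, the functor $\ii_A : \Rep^{\calc}(A)^{op} \to \Rep^{\calc}(A^{\circ})$ should send a right $A$-module $V_A$ to its dual as an object of the module category $\Mod_A$ (equivalently, the module $\homm_A^{\calc}(V,A)$ formed using the internal hom, which is again the right-adjoint description used to define $(-)^{\circ}$ on $1$-morphisms), and the $2$-morphism components $\ii_M$ are the canonical comparison isomorphisms $V^{\circ} \otimes_{A^{op}} M^{\vee} \xrightarrow{\simeq} (V \otimes_A M)^{\circ}$. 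These exist precisely because every bimodule in $\Alg^{fd}(\calc)$ is dualisable as a $1$-morphism, by the separability hypothesis; this is the analogue of the projectivity remark in Section~\ref{section:alg}. One then checks pseudonaturality (the analogue of the first Lemma in Section~\ref{section:rep}) by reducing everything to the coherence isomorphisms for adjoints in a bicategory, and promotes $\ii$ to an adjoint equivalence via Theorem~\ref{thm:adjointequiv}, using the canonical double-dual isomorphism $V^{\circ\circ} \simeq V$ valid in any semisimple module category as the chosen unit.

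Second I would define the modification $\theta$ exactly as in Section~\ref{section:rep}: $\theta_A$ is the natural isomorphism between $\ii_{A^{op}} \circ (\ii_A)^{op}$ and $(-) \otimes_A A$ whose component at $V \in \Rep^{\calc}(A)$ is the composite $V^{\circ\circ} \xrightarrow{\simeq} V \xrightarrow{\simeq} V \otimes_A A$, the first arrow being the inverse of the double-dual identification. Checking that this family is a modification (the analogue of the second Lemma) is routine naturality. The substantive part is then verifying the single compatibility diagram of \cite{shulman} relating $\ii$, $\theta$, $\zeta$, and the duality involutions on both sides. Here I would transcribe the pasting-diagram argument of the proof of Theorem~\ref{thm:rep} almost word for word: one side pastes to $V^{\circ\circ\circ} \to (V \otimes_A A)^{\circ} \to V^{\circ} \otimes_{A^{op}} A^{\vee}$, the other to $V^{\circ\circ\circ} \to V^{\circ} \otimes_{A^{op}} A^{op} \xrightarrow{\id \otimes \zeta_A} V^{\circ} \otimes_{A^{op}} A^{\vee}$, and equality follows from two small commuting squares: one expressing $(\theta_A)_V = 1_V \circ (\psi_V)^{-1}$ together with $\psi_{V^{\circ}} = (\psi^{\circ}_V)^{-1}$, the other expressing the definition of the coherence isomorphism $\ii_M$ in terms of $\zeta_A$. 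The only genuine work is to confirm that the relevant "canonical isomorphisms" in $\Mod_A$ — the double-dual isomorphism, the comparison $V^{\circ} \otimes_{A^{op}} M^{\vee} \simeq (V \otimes_A M)^{\circ}$, and the unitor $V \simeq V \otimes_A A$ — are compatible in the way those two squares demand; this is precisely the content of the appendix lemmas used in Section~\ref{section:rep}, now re-derived inside an arbitrary fusion category rather than $\Vectt_{\fieldk}$.

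The main obstacle I anticipate is establishing the categorical analogues of the appendix results on modules over finite-dimensional algebras — in particular the natural isomorphism $\homm_{C}(M \otimes_B N, C) \simeq \homm_{C}(N,C) \otimes_B \homm_B(M,B)$ and the naturality of the double-dualisation isomorphism — in the setting where $\homm$ is replaced by the internal hom / adjoint-$1$-morphism construction in $\Alg^{fd}(\calc)$. Concretely one must show that the $1$-morphism dual $(-)^{\vee}$ in the bicategory $\Alg^{fd}(\calc)$ is compatible with composition of $1$-morphisms up to coherent isomorphism, i.e.\ $(N \circ M)^{\vee} \simeq M^{\vee} \circ N^{\vee}$ naturally, and that the resulting comparison data satisfy the hexagon/pentagon conditions making $(-)^{\circ}$ a pseudofunctor — this is exactly the content already claimed (as a Claim) in Section~\ref{section:general}, and the present statement presupposes it. Granting that, the remaining steps are formal diagram chases identical in structure to those in Section~\ref{section:rep}, since, as the remarks there emphasise, all the data is produced from the coherence isomorphisms of the bicategory and the pseudofunctor $(-)^{\circ}$ themselves, and nothing in the Section~\ref{section:rep} argument used finite-dimensionality of $\fieldk$-vector spaces beyond the existence and naturality of these isomorphisms.
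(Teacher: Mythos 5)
Your proposal matches the paper's own approach exactly: the paper leaves this statement as a \emph{Claim}, remarking only that it ``can be straightforwardly obtained following the lines of the special case proven in Section \ref{section:rep}'', which is precisely the transcription you carry out. Your identification of the genuinely non-formal ingredients --- the compatibility of $(-)^{\vee}$ with composition of $1$-morphisms, the double-dual isomorphism, and the internal-hom analogues of the Appendix~\ref{appendix:modules} lemmas --- is exactly the part the paper also defers, so the sketch is faithful and no gap beyond what the paper itself leaves open is introduced.
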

\appendix
\section*{Appendix}
\renewcommand{\thesection}{A} 

\subsection{Background on modules over finite-dimensional algebras}\label{appendix:modules}
In the following, we recall the basic material we need regarding finite-dimensional modules over finite-dimensional $\fieldk$-algebras. We fix a field $\fieldk$ which is of characteristic 0 and algebraically closed. We will mainly follow \cite{frobbook}, to which we refer the reader for the proofs of the various statements.

Let $A$ be a finite-dimensional $\fieldk$-algebra. Recall that the category $\Mod_{A}$ of finite-dimensional right modules over $A$ is an abelian category. Recall also that for any right $A$-module $M$, the vector space $\hom_{A}(M,A)$ comes equipped canonically with a left $A$-module structure induced by left multiplication on $A$.
\begin{definition}
An object $P \in \Mod_{A}$ is called \emph{projective} if the functor $\hom_{A}(P,-):\Mod_{A}\to\Vectt_{\fieldk}$ is exact.
\end{definition}
\begin{thm} Let $A$ be a \emph{semisimple} finite-dimensional $\fieldk$-algebra. Then any finite-dimensional module over $A$ is projective. 
\end{thm}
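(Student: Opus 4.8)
The plan is to prove that every finite-dimensional module $M$ over a semisimple finite-dimensional $\fieldk$-algebra $A$ is projective by reducing the lifting property against epimorphisms to the existence of splittings for short exact sequences of $A$-modules. The cleanest route uses the characterization of semisimplicity: $A$ is semisimple if and only if every short exact sequence of (finite-dimensional) $A$-modules splits. So first I would recall, or take as the working definition, that semisimplicity of $A$ means $\Mod_A$ is a semisimple abelian category, i.e.\ every object is a direct sum of simple objects and every submodule is a direct summand.

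Next, I would fix a surjection $\pi\colon N \twoheadrightarrow L$ in $\Mod_A$ and a morphism $g\colon M \to L$, and seek a lift $\tilde g\colon M \to N$ with $\pi \circ \tilde g = g$. The key step is to observe that the inclusion $\ker\pi \hookrightarrow N$ admits an $A$-linear retraction $r\colon N \to \ker\pi$, equivalently that the short exact sequence $0 \to \ker\pi \to N \to L \to 0$ splits; this is exactly where semisimplicity enters, via the direct-summand property applied to the submodule $\ker\pi \subseteq N$. Choosing such a splitting $s\colon L \to N$ with $\pi \circ s = \id_L$, one sets $\tilde g := s \circ g$, and then $\pi \circ \tilde g = \pi \circ s \circ g = g$, which shows $\hom_A(M,-)$ preserves epimorphisms. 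Combined with the fact that $\hom_A(M,-)$ is always left exact on an abelian category, this gives exactness, hence projectivity of $M$. Note that $M$ itself plays no special role here beyond being an object of $\Mod_A$, so in fact every object of $\Mod_A$ is projective.

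Alternatively — and perhaps more in the spirit of an appendix following \cite{frobbook} — I would invoke the Wedderburn–Artin theorem: a finite-dimensional semisimple $\fieldk$-algebra is isomorphic to a finite product of matrix algebras $\prod_i M_{n_i}(\fieldk)$ (using that $\fieldk$ is algebraically closed of characteristic $0$, as assumed throughout), over which every module is a direct sum of copies of the simple modules $\fieldk^{n_i}$, and each such simple module is a direct summand of the free module $A$, hence projective; a direct sum of projectives is projective, so $M$ is projective. I would present the first argument as the main line and mention the second as a remark, since the first is self-contained once the direct-summand property is granted.

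The main obstacle is essentially bookkeeping rather than mathematics: making precise which formulation of ``semisimple'' is being used and citing the equivalence with the splitting property (or Wedderburn–Artin) from the reference, so that the argument does not become circular. Once that equivalence is in hand, the proof is a two-line diagram chase. I would therefore spend the bulk of the write-up pinning down the definition and the cited black box, and keep the lifting argument itself brief.
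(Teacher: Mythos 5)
Your argument is correct. Note that the paper does not actually prove this statement: the appendix explicitly defers all such proofs to the cited reference (\cite{frobbook}), so there is no in-paper argument to compare against. Your main line --- semisimplicity gives that every submodule is a direct summand, hence every short exact sequence in $\Mod_A$ splits, hence $\homm_{A}(M,-)$ preserves epimorphisms and, being left exact, is exact --- is the standard proof and matches the paper's definition of projectivity; your Wedderburn--Artin remark is also fine, and your caution about fixing which characterisation of ``semisimple'' is taken as the definition is exactly the right thing to flag.
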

We recall also the notion of tensor product over an algebra
\begin{definition}
Let $M$ and $N$ be a right and left $A$-module, respectively. The \emph{tensor product over $A$} of $M$ and $N$ is the vector space given by
\begin{equation}
M\otimes_{A}N:=\left\{x\otimes{y}\mid x\in{M}, y\in{N} \right \}/\left\{xa\otimes y - x\otimes{ay}\right\}
\end{equation}
\end{definition}
The following lemma is immediate
\begin{lemma}\label{lemma:braid}
Let $M$ and $N$ be a right and left $A$-module, respectively. The canonical braiding on $\Vectt_{\fieldk}$ induces a linear isomorphism
\begin{equation}
M\otimes_{A}N\simeq{ N\otimes_{A^{op}}M}
\end{equation}
\end{lemma}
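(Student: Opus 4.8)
The plan is to show that the $\fieldk$-linear braiding $\tau\colon M\otimes_{\fieldk}N\to N\otimes_{\fieldk}M$, given on simple tensors by $x\otimes y\mapsto y\otimes x$, descends to the two quotients defining $M\otimes_A N$ and $N\otimes_{A^{op}}M$, and that the induced map is invertible. Recall that, as explained earlier in the paper, the right $A$-module $M$ is canonically a left $A^{op}$-module via $a\cdot x:=xa$, and the left $A$-module $N$ is canonically a right $A^{op}$-module via $y\cdot a:=ay$, where juxtaposition denotes the original $A$-actions. These are precisely the structures that make $N\otimes_{A^{op}}M$ meaningful, so the first step is to make this identification explicit.

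Next I would record the two quotients in parallel. The space $M\otimes_A N$ is the quotient of $M\otimes_{\fieldk}N$ by the subspace $R$ spanned by the elements $xa\otimes y-x\otimes ay$, while $N\otimes_{A^{op}}M$ is the quotient of $N\otimes_{\fieldk}M$ by the subspace $R'$ spanned by $(y\cdot a)\otimes x-y\otimes(a\cdot x)=ay\otimes x-y\otimes xa$. The key computation is to apply $\tau$ to a generator of $R$: one finds $\tau(xa\otimes y-x\otimes ay)=y\otimes xa-ay\otimes x=-\bigl(ay\otimes x-y\otimes xa\bigr)$, which is, up to sign, a generator of $R'$. Hence $\tau(R)\subseteq R'$, so $\tau$ factors through a well-defined $\fieldk$-linear map $\bar\tau\colon M\otimes_A N\to N\otimes_{A^{op}}M$ sending $x\otimes y\mapsto y\otimes x$.

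Finally I would construct the inverse by the symmetric argument: the inverse braiding $\tau^{-1}\colon N\otimes_{\fieldk}M\to M\otimes_{\fieldk}N$ sends $R'$ into $R$ by running the same computation backwards, hence descends to a map $\bar\sigma\colon N\otimes_{A^{op}}M\to M\otimes_A N$. Since $\tau$ and $\tau^{-1}$ are mutually inverse on the free $\fieldk$-tensor products, the induced maps $\bar\tau$ and $\bar\sigma$ are mutually inverse on the quotients, which establishes the claimed isomorphism $M\otimes_A N\simeq N\otimes_{A^{op}}M$. The argument is entirely formal and presents no genuine obstacle; the only point requiring care is to keep the two $A^{op}$-actions straight, so that the image under $\tau$ of a defining relation of $M\otimes_A N$ is correctly matched with a defining relation of $N\otimes_{A^{op}}M$.
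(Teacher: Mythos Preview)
Your argument is correct and is exactly the verification the paper has in mind; the paper simply declares the lemma ``immediate'' and gives no proof, so there is nothing to compare beyond noting that your write-up spells out the check that the braiding carries the relations $xa\otimes y-x\otimes ay$ to (minus) the relations $ay\otimes x-y\otimes xa$.
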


Notice that if $M$ is a $(B,A)$-bimodule and $N$ is a $(A,C)$-bimodule, then $M\otimes_{A}N$ canonically inehrits a $(B,C)$-bimodule structure. Moreover, the isomorphism in Lemma \ref{lemma:braid} is compatible with such bimodule structure.

The following theorem, called the \emph{adjoint theorem}, asserts that for any $(A,B)$-bimodule, the functors $(-)\otimes_{A}M$ and $\hom_{B}(M,-)$ form an adjoint pair
\begin{thm}\label{thm:adj} Let $A$ and $B$ be $\fieldk$-algebras, and let $M$ be a $(A,B)$-bimodule. Then for any right $A$-module $X$ and right $B$-module $Y$ the linear map
\begin{align}
\begin{split}
\hom_{B}(X\otimes_{A}M, Y) &\longrightarrow \hom_{A}(X,\hom_{B}(M,Y))\\
g & \longmapsto \left(x \longmapsto f_{x}:m \longmapsto g(x\otimes{m})\right)
\end{split}
\end{align}
is an isomorphism.
\end{thm}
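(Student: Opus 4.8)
The plan is to exhibit an explicit two-sided inverse, so that the whole statement reduces to a handful of well-definedness checks together with two immediate computations showing the two maps compose to identities; no finiteness, projectivity, or semisimplicity hypotheses are needed, so the argument will be valid for arbitrary $\fieldk$-algebras. Write $\Phi$ for the map in the statement, so that $\Phi(g)(x)(m) = g(x \otimes m)$. The first task is to confirm that $\Phi$ really takes values where claimed. For fixed $g \in \hom_B(X \otimes_A M, Y)$ and $x \in X$, the assignment $m \mapsto g(x \otimes m)$ is right $B$-linear, because the right $B$-action on $X \otimes_A M$ is carried by $M$, namely $(x \otimes m)b = x \otimes (mb)$, and $g$ is right $B$-linear; hence $\Phi(g)(x) \in \hom_B(M, Y)$. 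Next, $\Phi(g)$ must be a right $A$-module morphism $X \to \hom_B(M, Y)$, where the right $A$-action on $\hom_B(M, Y)$ is the one recorded earlier in the paper, $(\phi \cdot a)(m) = \phi(am)$. This is exactly where the tensor relation over $A$ enters: $\Phi(g)(xa)(m) = g(xa \otimes m) = g(x \otimes am) = \Phi(g)(x)(am) = (\Phi(g)(x) \cdot a)(m)$, and $\fieldk$-linearity of $\Phi$ is immediate.

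For the inverse I would invoke the universal property of $\otimes_A$. Given a right $A$-module morphism $h \in \hom_A(X, \hom_B(M, Y))$, consider the $\fieldk$-bilinear map $X \times M \to Y$ sending $(x, m) \mapsto h(x)(m)$. The crucial point is that this map is $A$-balanced: $h(xa)(m) = (h(x) \cdot a)(m) = h(x)(am)$, using precisely that $h$ is right $A$-linear. By the defining universal property of the tensor product over $A$ it therefore descends to a $\fieldk$-linear map $\Psi(h): X \otimes_A M \to Y$ with $\Psi(h)(x \otimes m) = h(x)(m)$. One then checks that $\Psi(h)$ is right $B$-linear, again because the $B$-action passes through $M$ and each $h(x)$ is itself $B$-linear, so that $\Psi(h) \in \hom_B(X \otimes_A M, Y)$.

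It remains to verify $\Psi \circ \Phi = \id$ and $\Phi \circ \Psi = \id$. Both are one-line computations: $\Psi(\Phi(g))(x \otimes m) = \Phi(g)(x)(m) = g(x \otimes m)$ and $\Phi(\Psi(h))(x)(m) = \Psi(h)(x \otimes m) = h(x)(m)$. Since the elements $x \otimes m$ generate $X \otimes_A M$ and $\fieldk$-linear maps are determined by their values on generators, both composites are the identity, and $\Phi$ is the asserted isomorphism.

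The only genuine obstacle — if it deserves the name — is the $A$-balancing step used to define $\Psi$, since everything else is bookkeeping with the bimodule actions. That step is the precise incarnation of the tensor-hom adjunction, and it is what forces the hypothesis that $h$ be an $A$-module morphism rather than merely $\fieldk$-linear; once it is in place, the remaining verifications are routine.
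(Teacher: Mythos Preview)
Your proof is correct and is the standard argument for the tensor--hom adjunction. The paper itself does not give a proof of this statement: in the appendix it explicitly refers the reader to the cited reference \cite{frobbook} for the proofs, so there is nothing to compare against beyond noting that your direct construction of the inverse via the universal property of $\otimes_A$ is exactly the expected route.
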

In the case in which $Y$ is a $(C,B)$-bimodule, the vector spaces $\hom_{B}(X\otimes_{A}M, Y)$ and $\hom_{A}(X,\hom_{B}(M,Y))$ aquire a canonical structure of left $C$-module, induced by the left $C$-action on $Y$. It is routine to show that the isomorphism in Theorem \ref{thm:adj} is $C$-linear. Similarly if $X$ is a $(C,A$)-module.
\begin{thm}\label{thm:adj2}Let $A$ be a $\fieldk$-algebra, and let $P$ be a projective right $A$-module. Then:
\begin{itemize}
\item$\hom_{A}(P,A)$ is a projective left $A$-module; and
\item the linear map
\begin{align}
\begin{split}
\psi_{P}:P &\to \hom_{A^{op}}(\hom_{A}(P,A),A^{op})\\
p &\longmapsto (\psi_{P}(p): g \longmapsto g(p)) 
\end{split}
\end{align}
is an isomorphism of right $A$-modules.
\end{itemize}
\end{thm}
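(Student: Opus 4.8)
The plan is to prove both assertions simultaneously by \emph{dévissage}: reduce from a general projective $P$ to the free modules $A^n$, for which everything is a direct computation, exploiting that both $\hom_A(-,A)$ and the double-dual are additive functors and that $\psi$ is a natural transformation between them.

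First I would record the naturality. For a right $A$-module map $u : P \to P'$, the dual $u^* = \hom_A(u,A)$ is a left $A$-module map $\hom_A(P',A) \to \hom_A(P,A)$, and the evident square relating $\psi_P$ and $\psi_{P'}$ commutes; that is, $\psi$ is a natural transformation $\id \Rightarrow (-)^{\vee\vee}$ between additive endofunctors of $\Mod_A$. Both functors carry finite direct sums to finite direct sums, and $\psi$ is compatible with these splittings, so under the canonical identifications $\psi_{P \oplus Q} = \psi_P \oplus \psi_Q$.

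Next I would treat the base case $P = A$. A right $A$-module endomorphism of $A$ is left multiplication by its value at $1$, so evaluation $f \mapsto f(1)$ gives a left $A$-module isomorphism $\hom_A(A,A) \simeq A$; regarded as a right $A^{op}$-module this is free, hence projective. Dualizing once more, $\hom_{A^{op}}(\hom_A(A,A), A^{op}) \simeq \hom_{A^{op}}(A^{op}, A^{op}) \simeq A^{op}$, and unwinding the definitions shows that $\psi_A$ is exactly this chain of canonical identifications, hence an isomorphism of right $A$-modules. By additivity the same holds for every finite free module: $\hom_A(A^n, A) \simeq (A^{op})^n$ is free over $A^{op}$, and $\psi_{A^n} = \psi_A^{\oplus n}$ is an isomorphism.

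Finally, for a general projective $P$ in $\Mod_A$ — which, since we work with finite-dimensional modules, is automatically finitely generated — I would choose $Q$ and an isomorphism $P \oplus Q \simeq A^n$. Then $\hom_A(P,A)$ is a direct summand of $\hom_A(A^n, A) \simeq (A^{op})^n$, hence projective as a left $A$-module. For the double-dual claim, naturality of $\psi$ applied to the inclusion and projection of the summand $P$ shows that, under the decompositions $P \oplus Q$ and $P^{\vee\vee} \oplus Q^{\vee\vee}$, the isomorphism $\psi_{A^n}$ is block-diagonal with diagonal blocks $\psi_P$ and $\psi_Q$; a block-diagonal map is an isomorphism precisely when each block is, so $\psi_P$ is an isomorphism of right $A$-modules. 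I expect the only delicate point to be the bookkeeping of the left/right and $A$ versus $A^{op}$ module structures — in particular verifying that $\psi_P$ is right $A$-linear for the stated actions and that the summand identifications respect all of them — rather than any conceptual difficulty; note that semisimplicity of $A$ plays no role here, only finite generation of $P$.
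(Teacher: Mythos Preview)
The paper does not actually prove this theorem: it appears in Appendix~\ref{appendix:modules} as background material, with the remark ``We will mainly follow \cite{frobbook}, to which we refer the reader for the proofs of the various statements.'' Your d\'evissage argument --- checking the claim for $P=A$, extending to $A^{n}$ by additivity, and then passing to a direct summand --- is the standard proof and is correct; the bookkeeping you flag (right $A$-linearity of $\psi_{P}$ and compatibility of the $A$/$A^{op}$ structures with the summand decomposition) is routine.
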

Notice that in the above theorem, we regard a left (right) $A$-module as a right (left) $A^{op}$-module. Similar to the previous theorem, in the case in which $P$ is a $(B,A)$-bimodule, it is routine to check that the isomorphism in Theorem \ref{thm:adj2} is $B$-linear. Moreover, $\psi_{P}$ is natural in $P$.
\begin{thm}\label{thm:adj3}
Let $A$ and $B$ be $\fieldk$-algebras, and let $P$ be a $(B,A)$-bimodule which is projective as a right $A$-module. Then for any right $A$-module $X$ the linear map
\begin{align}
\begin{split}
X\otimes_{A}\hom_{A}(P,A)&\to \hom_{A}(P,X)\\
x\otimes{g} &\longmapsto (p \longmapsto x\cdot g(p)) 
\end{split}
\end{align}
is an isomorphism of right $B$-modules and natural in $X$.
\end{thm}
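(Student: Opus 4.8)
The plan is to prove the statement by a standard dévissage in the variable $P$, reducing everything to the free rank-one case. Denote the proposed map by $\Phi_{X}\colon X\otimes_{A}\hom_{A}(P,A)\to\hom_{A}(P,X)$, $\Phi_{X}(x\otimes g)=(p\mapsto x\cdot g(p))$. First I would check that $\Phi_{X}$ is well defined, i.e. that the bilinear assignment $(x,g)\mapsto(p\mapsto x\cdot g(p))$ is $A$-balanced: for $a\in A$ one has $x\cdot(a\cdot g)(p)=x\cdot\bigl(a\,g(p)\bigr)=(xa)\cdot g(p)$, using that the canonical left $A$-action on $\hom_{A}(P,A)$ is $(a\cdot g)(p)=a\,g(p)$ together with associativity of the right $A$-action on $X$. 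Right $B$-linearity and naturality in $X$ are then immediate: for $b\in B$ one computes $\Phi_{X}\bigl(x\otimes(g\cdot b)\bigr)(p)=x\cdot g(bp)=\bigl(\Phi_{X}(x\otimes g)\cdot b\bigr)(p)$, where $g\cdot b$ and $h\cdot b$ denote precomposition with the left $B$-action on $P$; and for a morphism $\phi\colon X\to X'$ of right $A$-modules the naturality square commutes because $\phi$ is $A$-linear.

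The heart of the argument is that $\Phi$ is a natural transformation between the two contravariant additive functors $F(P)=X\otimes_{A}\hom_{A}(P,A)$ and $G(P)=\hom_{A}(P,X)$ on right $A$-modules, and that it is an isomorphism on the free module of rank one. For $P=A$ both sides are canonically identified with $X$: on the left via $\hom_{A}(A,A)\cong A$ (by $g\mapsto g(1)$) followed by $X\otimes_{A}A\cong X$, and on the right via evaluation at $1$. A direct computation shows that $\Phi_{A}$ intertwines these two identifications, hence $\Phi_{A}$ is an isomorphism. Since both functors carry finite direct sums in $P$ to direct sums of vector spaces, and $\Phi$ is additive, it follows that $\Phi_{A^{n}}$ is an isomorphism for every finite free module $A^{n}$.

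To finish I would pass from free to projective modules. As $P$ is finitely generated projective as a right $A$-module, it is a retract of some $A^{n}$: choose an idempotent $e\colon A^{n}\to A^{n}$ with image $P$. Naturality of $\Phi$ in $P$ gives $\Phi_{A^{n}}\circ F(e)=G(e)\circ\Phi_{A^{n}}$, where $F(e)$ and $G(e)$ are idempotents whose images are precisely the summands $X\otimes_{A}\hom_{A}(P,A)$ and $\hom_{A}(P,X)$, and $\Phi_{P}$ is the restriction of $\Phi_{A^{n}}$ to these summands. Because $\Phi_{A^{n}}$ is an isomorphism intertwining the two idempotents, its restriction $\Phi_{P}$ is an isomorphism as well, which is exactly the claim (the $B$-linearity and naturality in $X$ established above persist on the summand).

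The main obstacle I anticipate is bookkeeping rather than conceptual: one must keep careful track of the several module structures in play — the canonical left $A$-structure on $\hom_{A}(P,A)$ used to form the tensor product over $A$, and the right $B$-structures induced by the bimodule $P$ — and verify that the idempotent $e$, which is an endomorphism of the right $A$-module $A^{n}$, induces \emph{compatible} idempotents on both $F$ and $G$ so that $\Phi$ genuinely restricts to the $P$-summand. It is precisely at this step that projectivity of $P$ as a right $A$-module enters, through its realisation as a retract of a free module; in the semisimple setting relevant to the paper this hypothesis is automatic, since every finite-dimensional module over a semisimple finite-dimensional algebra is projective.
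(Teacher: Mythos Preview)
Your argument is correct and is exactly the standard proof of this classical fact. Note, however, that the paper does not give its own proof of this statement: it appears in the appendix as background material on modules over finite-dimensional algebras, and the authors explicitly defer all proofs in that subsection to the reference \cite{frobbook}. So there is no ``paper's proof'' to compare against; your d\'evissage via naturality in $P$, reduction to $P=A$, passage to finite free modules by additivity, and then to projectives via retracts, is precisely the textbook route one finds in the cited source.

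One small remark on presentation: you invoke that $P$ is \emph{finitely generated} projective when writing it as a retract of some $A^{n}$. The theorem as stated only says ``projective'', but in the paper's standing conventions all modules are finite-dimensional over $\fieldk$, hence automatically finitely generated over $A$; so this is harmless, but worth making explicit.
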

Again, if $X$ is a $(C,A)$-bimodule, it is routine to check that the isomorphism in Theorem \ref{thm:adj3} is $C$-linear.
\subsection{Proof of Theorem \ref{circ is duality inv}}\label{appendix:proof}
We need to verify that $\zeta$ satisfies the compatibility required for a weak duality involution as stated in \cite{shulman}. Namely, we need to show that for any $A\in\Alg^{fd}$ we have the following equality of 2-morphisms
\begin{equation}\label{diagram: duality}
\begin{tikzcd}
A\arrow[r, "A"] & A \arrow[r, bend left=40, "A" {name=U}] \arrow[r, bend right=40, "\homm_{A^{op}}{(A^{op},A^{op})}"' {name=D}] & A \arrow[shorten <=5pt,shorten >=5pt, Rightarrow, from=U, to=D, "\zeta_{A^{op}}"]
\end{tikzcd}
=
\begin{tikzcd}
& A \arrow[d, shorten <=5pt,shorten >=5pt, Rightarrow, "\simeq"] \arrow[dr, bend left = 30, "A"]&\\
A\arrow[ur, "A"]\arrow[r, "A"'] & A \arrow[r, bend left=40, "A^{\circ\circ}" {name=U}] \arrow[r, bend right=40, "\homm_{A^{op}}{(A^{op},A^{op})}"' {name=D}] & A \arrow[shorten <=5pt,shorten >=5pt, Rightarrow, from=U, to=D, "\zeta^{\circ}_{A}"]
\end{tikzcd}
\end{equation}
First, recall that by construction 
\begin{align}
\begin{split}
\zeta_{A^{op}}:A & \to \homm_{A^{op}}(A^{op},A^{op})\\
x & \longmapsto \phi_{x}: a \longmapsto a\cdot{x}
\end{split}
\end{align}
and similarly
\begin{align}
\begin{split}
\zeta_{A}:A^{op} & \to \homm_{A}(A,A)\\
x & \longmapsto \bar{\phi}_{x}: a \longmapsto x\cdot{a}
\end{split}
\end{align}
The LHS of (\ref{diagram: duality}) is then given by the isomorphism
\begin{align}
\begin{split}
\id_{A}\otimes\zeta_{A^{op}}: A\otimes_{A}A & \to A\otimes_{A}\homm_{A^{op}}(A^{op},A^{op})\\
a\otimes b & \longmapsto a\otimes\phi_{b}.
\end{split}
\end{align}
\begin{rmk}
In the definitions above, the multiplication is \emph{always} performed in $A$.
\end{rmk}
On the other hand, the RHS of (\ref{diagram: duality}) is the following composition
\begin{equation}\label{eq:rhs}
A\otimes_{A}A\xrightarrow{\et_{A}}A\otimes_{A}A^{\circ\circ}
\xrightarrow{\id_{A}\otimes(\zeta_{A})^{*}}A\otimes_{A}\homm_{A^{op}}(A^{op},A^{op})
\end{equation}
where $\et_{A}$ is given by the 2-morphism defined in (\ref{eq:commcell}), namely it is given by the following composition
\begin{align}
\begin{split}
A\otimes_{A}A\xrightarrow{\simeq}A\xrightarrow{\simeq}A^{\circ\circ}\xrightarrow{\simeq}A
\otimes_{A}A^{\circ\circ}\\
a\otimes{b}\longmapsto a\cdot b \longmapsto f_{ab} \longmapsto 1\otimes f_{ab}
\end{split}
\end{align}
where 
\begin{equation}
f_{ab}(g):=g(ab),\quad\forall g\in\hom_{A}(A,A).
\end{equation}
Note now that $\forall x \in {A^{op}}$ we have the following
\begin{align}
\begin{split}
(\zeta_{A})^{*}(f_{ab})(x) &=f_{ab}(\zeta_{A}(x))\\
&=f_{ab}(\bar{\phi}_{x})\\
&=\bar{\phi}_{x}(a\cdot{b})\\
&=x\cdot a \cdot b\\
&=\phi_{b}(x\cdot{a})=\phi_{b}(a\star{x})\\
&=(\phi_{b}\star{a})(x)\\
&=(a\cdot\phi_{b})(x),
\end{split}
\end{align}
where for clarity we use $\cdot$ to denote an $A$-action, and $\star$ to denote an $A^{op}$-action. Hence the isomorphism in (\ref{eq:rhs}) is explicitely given by
\begin{equation}
a\otimes{b} \longmapsto 1\otimes a\cdot\phi_{b} = a\otimes\phi_{b},\quad \forall a,b\in{A}
\end{equation}
which agrees with the LHS in (\ref{diagram: duality}). $\Box$
\bibliography{bib}
\bibliographystyle{amsplain}

\end{document}